\def\l@subsection{\@tocline{2}{0pt}{2.5pc}{2.5pc}{}}
\def\chapter{\clearpage\thispagestyle{plain}\global\@topnum\z@ 
\@afterindenttrue \secdef\@chapter\@schapter}
\newtheorem{thm} {Theorem} [section]
\newtheorem{prop}{Proposition} [section]
\newtheorem{lem} {Lemma} [section]
\newtheorem{cornn}{Corollary}
\theoremstyle{definition}
\newtheorem{rem} {Remark} [section]
\newtheorem{rems} [rem]{Remarks}
\newcommand{\mf}{\mathfrak}
\newcommand{\mc}{\mathcal}
\newcommand{\mb}{\mathbb}
\newcommand{\ov}{\overline}
\newcommand{\sm}{\setminus}         
\newcommand{\ot}{\otimes}           
\newcommand{\la}{\langle}
\newcommand{\ra}{\rangle}
\newcommand{\Hom}{{\rm Hom}}        
\newcommand{\Mor}{{\rm Mor}}
\newcommand{\Vect}{{\rm Vect}}
\newcommand{\ind}{{\rm ind}}
\newcommand{\Spec}{{\rm Spec}}
\newcommand{\Ker}{{\rm Ker}}
\newcommand{\rk}{{\rm rk}}
\newcommand{\id}{{\rm id}}
\let\ttie\t
\newcommand{\tie}[1]{{\let\t\ttie \ttie#1}}
\renewcommand{\t}{\mf{t}}  
\newcommand{\ad}{{\rm ad}}               
\newcommand{\Lie}{{\rm Lie}}
\newcommand{\Dist}{{\rm Dist}}
\newcommand{\GL}{{\rm GL}}
\newcommand{\PGL}{{\rm PGL}}
\newcommand{\SL}{{\rm SL}}
\DeclareMathAlphabet\mbc{OMS}{cmsy}{b}{n}
\begin{document}

\title{Embeddings of spherical homogeneous spaces in characteristic $p$}

\begin{abstract}
Let $G$ be a reductive group over an algebraically closed field of characteristic $p>0$.
We study properties of embeddings of spherical homogeneous $G$-spaces. We look at Frobenius splittings,
canonical or by a $(p-1)$-th power, compatible with certain subvarieties.
We show the existence of rational $G$-equivariant resolutions by toroidal embeddings, and give results about
cohomology vanishing and surjectivity of restriction maps of global sections of line bundles.
We show that the class of homogeneous spaces for which our results hold contains the symmetric homogeneous spaces
in characteristic $\ne 2$ and is closed under parabolic induction.
\end{abstract}

\author[R.\ H.\ Tange]{Rudolf Tange}

\keywords{equivariant embedding, spherical homogeneous space, Frobenius splitting}
\thanks{2010 {\it Mathematics Subject Classification}. 14L30, 14M27, 57S25, 20M32.}

\maketitle{}
\markright{\MakeUppercase{Embeddings of spherical homogeneous spaces in characteristic} $p$}

\section*{Introduction}
Let $k$ be an algebraically closed field of prime characteristic $p$ and let $G$ be a connected reductive group over $k$.
Let $H$ be a closed $k$-subgroup scheme of $G$ such that the homogeneous space $G/H$ (see \cite[III.3.5.4]{DG}) is spherical.
We are interested in properties of embeddings (always normal and equivariant) of $G/H$: existence of equivariant ``rational" resolutions
by toroidal embeddings, (canonical) Frobenius splittings, compatible splittings, cohomology vanishing, surjectivity of restriction maps of global sections of line bundles.
The idea is to show that there is a class of spherical homogeneous spaces whose embeddings have nice properties, which contains symmetric homogeneous spaces
in characteristic $\ne 2$ and the reductive group embeddings in any characteristic, and is closed under parabolic induction.
Previously, special cases were studied in \cite{Str} (the wonderful compactification of an adjoint group), \cite{BriI} ($p$ large), \cite{DeC-Spr} (the wonderful compactification of
an ``adjoint" symmetric homogeneous space), \cite{Rit} (arbitrary embeddings of reductive groups), \cite{T} (embeddings of homogeneous spaces induced from reductive groups).

We describe some of the issues that arise in this general approach. Let $x\in G/H$ be the image of the neutral element $e$ under the canonical map $G\to G/H$,
let $B$ be a Borel subgroup of $G$ such that the orbit $B\cdot x$ is open in $G/H$ and let $T$ be a maximal torus of $B$.
In \cite{T} it was shown that, after reducing to the toroidal case, one has a $B$-canonical splitting by a $(p-1)$-th power which compatibly splits the $B$-stable prime divisors,
and this implies certain cohomology vanishing results.
In more general situations, e.g. symmetric homogeneous spaces, this need no longer be true.
Firstly, one cannot construct a $B$-canonical splitting for the wonderful compactification which is a $(p-1)$-th power, since the closed orbit is $G/P$ for some $P$ which is not necessarily a Borel subgroup,
and therefore its $B$-canonical splitting is in general not a $(p-1)$-th power.
Since it seems necessary to have a splitting by a $(p-1)$-th power to get cohomology vanishing results using the Mehta-Van der Kallen theorem \cite[Thm.~1.2.12, 1.3.14]{BriKu}, and $B$-canonical
splitting are of interest for results about good filtrations we use both types of splittings. 
Furthermore, one cannot expect a splitting which compatibly splits all $B$-stable prime divisors, since their schematic intersections need not be reduced, see
\cite[p266]{Bri2}.
Finally, one cannot expect the ``usual" cohomology vanishing results for arbitrary spherical varieties in characteristic $p$, see \cite[Example~3]{HL}
for an example of an ample line bundle on a projective spherical variety with nonzero $H^1$. So we need to make some assumptions on the homogeneous space.

The paper is organised as follows. In Section~\ref{s.prelim} we introduce the notation, state the basic assumptions (A1)-(A4), and prove some technical lemmas.
In Section~\ref{s.frobsplit_ratres} we prove our three main results, each result requires one of two extra assumptions (B1) and (B2).
As it turns out, (B2) implies (B1), see Remark~\ref{rem.cansplit}.2.
In Proposition~\ref{prop.toroidal} we show that, assuming (B1), every smooth toroidal embedding is split by a $(p-1)$-th power
which compatibly splits the $G$-stable closed subvarieties. Theorem~\ref{thm.resolution} states that, assuming (B1), every $G/H$-embedding has an equivariant rational resolution by
a toroidal embedding. This then leads to results about Frobenius splitting, cohomology vanishing, and surjectivity of restriction maps of global sections of line bundles.
The proofs of Proposition~\ref{prop.toroidal} and Theorem~\ref{thm.resolution} use ideas from \cite{Str}, \cite{Rit}, \cite{BriKu} together with the fact that $G/P$
is split by a $(p-1)$-th power, Lemma~\ref{lem.GmodP}, and the fact that we have a certain expression for the canonical divisor which contains every $B$-stable prime divisor with strictly negative coefficient, Lemma~\ref{lem.pull-back}.
Theorem~\ref{thm.cansplit} states that, assuming (B2), every $G/H$-embedding is $B$-canonically split compatible with all $G$-stable closed subvarieties.
In Section~\ref{s.symmetric} we show that the class of homogeneous spaces
which satisfy (A1)-(A4) and (B2) contains the symmetric spaces.
In Section~\ref{s.induction} we show that this class is stable under parabolic induction.

\section{Preliminaries}\label{s.prelim}
We retain the notation $k$, $p$, $G$, $H$, $x$, $B$, $T$ from the introduction.
Recall that the canonical morphism $G/H_{\rm red}\to G/H$, $H_{\rm red}=H(k)$ the reduced scheme associated to $H$, is a homeomorphism.
See \cite[III \S3]{DG}. 
We will usually deal with varieties, unless we explicitly use the term ``scheme".
Schemes are always supposed to be algebraic over $k$. We will use ordinary notation for notions like inverse image and intersection,
and special notation for their schematic analogues.

\subsection{Frobenius splitting}
We give a very brief sketch of the basics of Frobenius splitting, introduced by Mehta and Ramanathan in \cite{MeRa}.
For precise statements we refer to \cite{BriKu}.
We start with the affine case. For a commutative ring $A$ of characteristic $p$ put $A^p=\{a^p\,|\,a\in A\}$, a subring of $A$.
First assume $A$ is reduced. Then one can define a Frobenius splitting of $A$ as an $A^p$-module direct complement to $A^p$ in $A$.
For example for a polynomial ring $A=k[x_1,\ldots,x_n]$, a Frobenius splitting is given by the $A^p$-span
of the restricted monomials (i.e. exponents $<p$) different from 1. Of course a Frobenius splitting of $A$ can also
be seen as an $A^p$-linear projection from $A$ onto $A^p$, i.e. as an $A^p$-linear map $\sigma:A\to A^p$ with $\sigma(1)=1$.
This is the point of view of \cite{Mat}. If we ``take the $p$-th root from this definition" we obtain the usual
definition: a Frobenius splitting of $A$ is a homomorphism of abelian groups $\sigma:A\to A$ with $\sigma(a^pb)=a\sigma(b)$ for all $a,b\in A$ and $\sigma(1)=1$.
This definition also works for non-reduced rings, and then it follows immediately that a Frobenius split ring is always reduced.
Finally one can ``sheafify" this definition to obtain the definition of Frobenius splitting for any scheme $X$ (separated and of finite type) over $k$:
Let $F:X\to X$ be the absolute Frobenius morphism, i.e. the morphism of schemes (not over $k$) which is the identity on the underlying topological space
and for which $F^\#:\mc O_X\to\mc O_X$ is the $p$-th power map.
Then a Frobenius splitting of $X$ is a morphism of $\mc O_X$-modules $\sigma:F_*(\mc O_X)\to\mc O_X$ which maps $1$ to $1$ (in $\mc O_X(X)$ and therefore in any $\mc O_X(U)$).
We can also consider $F^\#$ as a morphism of $\mc O_X$-modules $:\mc O_X\to F_*(\mc O_X)$ and then a Frobenius splitting of $X$ is
a left-inverse of the $\mc O_X$-module morphism $F^\#$. If $\sigma$ is a Frobenius splitting of $X$, then a closed subscheme of $X$ is called
{\it compatibly split} (with $\sigma$) if its ideal sheaf is stable under $\sigma$.
A compatibly split subscheme is itself Frobenius split.

From the existence of a Frobenius splitting one can deduce cohomology vanishing results.
For example, if $X$ is a Frobenius split variety which is proper over an affine, and $\mc L$ is an ample line bundle on $X$, then $H^i(X,\mc L)=0$ for all $i>0$.
See \cite[Thm.~1.2.8]{BriKu}. This is based on the fact that, in the presence of a Frobenius splitting, one can embed $H^i(X,\mc L)$ in $H^i(X,\mc L^{p^r})$ for all $r\ge1$.
Using compatible splittings or splitting relative to a divisor one can obtain more refined results, see \cite[Lem.~1.4.7, 1.4.11]{BriKu}.

An important result about Frobenius splittings of a smooth variety $X$ is that they correspond
to certain nonzero global sections of the $(1-p)$-th power of the canonical line bundle $\omega_X$,
see \cite[Thm.~1.3.8]{BriKu}. This result was first proved for $X$ smooth and projective in \cite[Sect.~2]{MeRa}.
Since $H^0(X,\omega_X^{1-p})$ may be zero, this also shows that, not every smooth variety is Frobenius split.

If $G$ acts on a variety $X$, then it also acts on $\Hom_{\mc O_X}(F_*(\mc O_X),\mc O_X)$ and turns this vector space into a rational $G$-module, see \cite[4.1.5]{BriKu}.
If $X$ is smooth  then the same holds for $H^0(X,\omega_X^{1-p})$.
Now one can define a Frobenius splitting to be {\it $B$-canonical} if it is $T$-fixed and is killed by all divided powers $e_\alpha^{(n)}\in\Dist(G)$, $\alpha$ simple relative to $B$, $n\ge p$.
Here $\Dist(G)$ is the {\it distribution algebra} or hyper algebra of $G$.
The notion of canonical splitting was introduced by Mathieu who proved the following result.
If $\mc L$ is a $G$-linearised line bundle on a $B$-canonically split $G$-variety $X$, then the $G$-module $H^0(X,\mc L)$ has a good filtration.
See \cite[Sect.~5,6]{Mat} and \cite[Ch.~4]{BriKu} for more detail. The notion of {\it good filtration} can be found in \cite{Jan}, \cite{Mat} or \cite{BriKu}.

We finish our discussion of Frobenius splittings, with a lemma about splittings of $G/P$.
The lemma seems to be known, but we provide a proof for lack of reference.
It will be needed in the proof of Proposition~\ref{prop.toroidal}.
For a parabolic subgroup $P$ containing $B$ we denote by $\rho_P$
the half sum of the roots of $T$ in the unipotent radical $R_uP$ and we put $\rho=\rho_B$.
We denote the opposite parabolic by $P^-$.
For $\lambda$ a character of $P^-$ (viewed as a character of $T$ orthogonal to the simple coroots of the Levi of $P$ containing $T$)
we denote the line bundle $G\times^{P^-}k_\lambda$ on $G/P^-$ by $\mc L(\lambda)$.
\begin{lem}\label{lem.GmodP}
Let $P$ be a parabolic subgroup of $G$. Then $G/P$ is split by a $(p-1)$-th power.
\end{lem}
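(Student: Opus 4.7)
The plan is to construct an explicit Frobenius splitting of $G/P^-$ (which is isomorphic to $G/P$) that is manifestly a $(p-1)$-th power, by adapting the Mehta--Ramanathan construction of the canonical splitting of $G/B$ to this partial flag variety.

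First, I would identify $\omega_{G/P^-}^{-1}$ with $\mc L(2\rho_P)$ as $G$-linearised line bundles. The tangent space at $x_0=eP^-/P^-$ is $\g/\mf p^-$, which as a $T$-module is canonically $\mathrm{Lie}(\mathrm{R}_uP)$, whose top exterior power has $T$-weight $2\rho_P=\sum_{\alpha\in R(\mathrm{R}_uP,T)}\alpha$. Since both line bundles are $G$-linearised and agree on the fibre over $x_0$, they coincide.

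Second, I would exhibit the natural section $\tau\in H^0(G/P^-,\mc L(2\rho_P))$ whose $(p-1)$-th power will serve as the splitting. The big open cell $\mathrm{R}_u(P)\cdot x_0\subset G/P^-$ is isomorphic to affine space of dimension $n=\dim G/P^-$ via the root coordinates $x_\alpha$, $\alpha\in R(\mathrm{R}_uP,T)$. Trivialising $\mc L(2\rho_P)$ on this cell using the $2\rho_P$-weight vector in $k_{2\rho_P}$, the regular function $\prod_\alpha x_\alpha$ has the correct $T$-weight ($2\rho_P$), and one checks that it extends (with the appropriate order of vanishing along each $B$-stable prime divisor) to a global section $\tau$ of $\mc L(2\rho_P)$ on $G/P^-$.

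Third, I would verify that $\tau^{p-1}$, viewed as a global section of $\omega_{G/P^-}^{1-p}$, defines a Frobenius splitting. For this the standard local criterion applies (Brion--Kumar \cite[Thm.~1.3.8]{BriKu}, or its sufficient form stated essentially as \cite[Prop.~1.3.11]{BriKu}): on a smooth variety $X$ of dimension $n$, if a section of $\omega_X^{1-p}$ has local expression, in terms of a regular system of parameters $y_1,\dots,y_n$ at some closed point, equal to a unit times $(y_1\cdots y_n)^{p-1}(dy_1\wedge\cdots\wedge dy_n)^{-(p-1)}$, then it is a Frobenius splitting. Taking the $x_\alpha$ as local parameters at the origin of the big cell, the section $\tau^{p-1}$ has precisely this form, so we conclude.

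The main obstacle is the bookkeeping in the third step: keeping the identification $\omega_{G/P^-}^{-1}\cong\mc L(2\rho_P)$, the chosen trivialisation of $\mc L(2\rho_P)$ over the big cell, and the local parameters $x_\alpha$ consistent so that the splitting criterion applies cleanly. The underlying computation itself is the same as the classical Mehta--Ramanathan calculation on $G/B$, merely restricted to the roots in $\mathrm{R}_uP$ rather than all of $R^+$, so no genuinely new analytic difficulty arises.
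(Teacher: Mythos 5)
Your overall strategy---produce an explicit global section $\tau$ of $\omega_{G/P^-}^{-1}\cong\mc L(2\rho_P)$ and verify the local splitting criterion for $\tau^{p-1}$---is genuinely different from the paper's argument. The paper instead invokes the surjectivity of the multiplication map $H^0(G/P^-,\mc L(2\rho_P))^{\ot(p-1)}\to H^0(G/P^-,\mc L(2(p-1)\rho_P))$ from \cite[Thm.~3.1.2(c)]{BriKu} (valid since $\rho_P$ is dominant) together with \cite[Ex.~1.3.E(2)]{BriKu}, which upgrades the known Frobenius splitting of the flag variety to one by a $(p-1)$-th power once that surjectivity is known. The paper's route thus avoids any local coordinate computation and never needs to know what the splitting section looks like; it only needs to know that one exists and is forced to be a $(p-1)$-th power.

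The weak point in your argument is the assertion that the monomial $\prod_\alpha x_\alpha$ on the big cell ``extends to a global section $\tau$ of $\mc L(2\rho_P)$,'' which you relegate to ``one checks.'' This is in fact the crux of the whole argument and is not routine bookkeeping. The divisors $\overline{\{x_\alpha=0\}}$ are irreducible but not in general $B$-stable (the coordinate $x_\alpha$ is not a $B$-eigenfunction unless $\alpha$ is extremal in $R(R_uP,T)$), so one cannot simply appeal to a weight or Schubert-divisor bookkeeping; one has to actually bound the order of the poles of $\prod_\alpha x_\alpha$ along each boundary Schubert divisor $D_\beta$ against the zero order of the trivialising anticanonical section there. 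The claim is verifiably true for $\mathbb P^n$, for Grassmannians (where it is the product of Pl\"ucker coordinates), and can be checked directly for $\SL_3/B$, but a proof in general is exactly the kind of explicit computation your plan promised to avoid, and you have not supplied it. Notice that once such a $\tau$ is produced, your step 3 does go through: for a complete connected $X$, $\sigma(1)\in H^0(X,\mc O_X)=k$ is a constant, so it suffices to check nonvanishing of the $(t_1\cdots t_n)^{p-1}$ Cartier coefficient at a single point and rescale. So the issue is concentrated in the unproved extension claim in step 2; until that is filled in, the argument is incomplete.
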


\begin{proof}
We prove the claim for $G/P^-$. It is well known and easy to see that $\rho_P$ is dominant,
see e.g. \cite[Sect~3.1]{BriKu} ($\rho_P$ is there denoted $\delta_P$). So by \cite[Thm 3.1.2(c)]{BriKu}
the multiplication map $H^0(G/P^-,\mc L(2\rho_P))^{\ot(p-1)}\to H^0(G/P^-,\mc L(2(p-1)\rho_P))$ is surjective.
Now $\mc L(-2\rho_P)$ is the canonical bundle of $G/P^-$, so the assertion follows from \cite[Ex.~1.3.E(2)]{BriKu}. 
\end{proof}

\subsection{Spherical embeddings}
We recall some facts from the theory of spherical embeddings. For more detail we refer to \cite{Bri3}, \cite{Knop}, and \cite{Tim}.
A {\it $G/H$-embedding} is a normal $G$-variety $X$ with a $G$-equivariant embedding $G/H\hookrightarrow X$ of which the image is open and dense.
The set of $T$-weights of the nonzero $B$-semi-invariant rational functions on $G/H$ is denoted by $\Lambda=\Lambda_{G/H}$.
Since $B\cdot x$ is open in $G/H$ every $B$-semi-invariant rational function on $G/H$ is defined at $x$,
and completely determined by its $T$-weight and its value at $x$. So $k(G/H)^{(B)}/k^\times\cong\Lambda$, where $k(G/H)^{(B)}$ is the group of $B$-semi-invariant
rational functions and $k^\times$ is the multiplicative group of $k$. Put $\Lambda_{\mb Q}=\mb Q\ot_{\mb Z}\Lambda$. Then $\Hom_{\mb Z}(\Lambda,\mb Q)=\Lambda_{\mb Q}^*$.
Any (discrete) valuation of the function field $k(G/H)$ determines a $\mb Z$-linear function $k(G/H)^{(B)}/k^\times\to\mb Q$, i.e. an element of $\Lambda_{\mb Q}^*$.
In particular, any prime divisor (irreducible closed subvariety of codimension $1$) of any $G/H$-embedding determines an element of $\Lambda_{\mb Q}^*$.
A $G$-invariant valuation is completely determined by its image in $\Lambda_{\mb Q}^*$, so we can identify the set of $G$-invariant valuations of $G/H$
with a subset $\mc V=\mc V(G/H)$ of $\Lambda_{\mb Q}^*$. It is well-known that $\mc V$ is a finitely generated convex cone which spans $\Lambda_{\mb Q}^*$.

If $X$ is a $G/H$-embedding, then the complement of the open $B$-orbit has the $B$-stable prime divisors as its irreducible components. 
These we divide in two classes: The ones that are $G$-stable are called {\it boundary divisors} and will be denoted by $X_i$.
The others are the $B$-stable prime divisors that intersect $G/H$. These will be denoted by $D$, possibly with a subscript.
A $G/H$-embedding is called {\it simple} if it has a unique closed $G$-orbit. The {\it coloured cone} of a simple embedding $X$ is the pair $(\mc C,\mc F)$ where
$\mc F$, the set of colours of $X$, is the set of $B$-stable prime divisors of $G/H$ whose closure in $X$ contains the closed orbit of $X$ and $\mc C\subseteq\Lambda_{\mb Q}^*$
is the cone generated by the images of all boundary divisors of $X$ and the image of $\mc F$. A simple embedding is completely determined by
its coloured cone. For an arbitrary embedding each orbit is the closed orbit of a unique simple open sub-embedding.
The coloured cones of these sub-embeddings form the {\it coloured fan} of the embedding. An embedding is completely determined by its coloured fan.
More generally, coloured fans can be used to describe morphisms between embeddings of different homogeneous spaces, see \cite[Sect.~4]{Knop}.

A $G/H$-embedding is called {\it toroidal} if no $B$-stable prime divisor of $X$ which intersects $G/H$ (i.e. which is not $G$-stable) contains a $G$-orbit.
The toroidal embeddings are the embeddings corresponding to the coloured fans without colours, that is, the fans (in the sense of toric geometry) whose support is contained in $\mc V(G/H)$.
For a toroidal $G/H$-embedding $X$ we denote by $X_0$ the complement in $X$ of the union of the $B$-stable prime divisors that intersect $G/H$.
Note that every $G$-orbit in $X$ intersects $X_0$. We denote the closure of $T\cdot x$ in $X_0$ by $\ov{T\cdot x}^{X_0}$.
We will say that the {\it local structure theorem} holds for $X$ (relative to $x$, $T$ and $B$) if 

\medskip
{\it there exists a parabolic subgroup $P$ of $G$ containing $B$ such that, with $M$ the Levi subgroup of $P$ containing $T$, 
the variety $\ov{T\cdot x}^{X_0}$ is $M$-stable, the derived group $DM$ acts trivially on it, and the action of $G$
induces an isomorphism $R_uP\times \ov{T\cdot x}^{X_0}\stackrel{\sim}{\to}X_0$.}
\medskip

The local structure theorem for $X$ implies the local structure theorem for $G/H$ itself.
Furthermore, the parabolic subgroup $P$ only depends on $G/H$: it can be characterised as the stabiliser of the open $B$-orbit $B\cdot x$ or as the
stabiliser of the open subset $BH_{\rm red}$ of $G$ (for the action by left multiplication).

Whenever $M$ is a closed $k$-subgroup scheme of $G$ and $L$ a Levi subgroup of $G$ containing $T$ we denote by $M_L$ the schematic intersection of $L$ and $M$.
When the local structure theorem holds for $G/H$, then every rational function on $T\cdot x=T/H_T$ has a unique $U=R_uB$-invariant extension to $G/H$.
So $\Lambda=\mc X(T/H_T)$, the character group of $T/H_T$, and $\Lambda_{\mb Q}^*\cong\mb Q\ot_{\mb Z}\mc Y(T/H_T)$, where $\mc Y(T/H_T)$ is the cocharacter group of $T/H_T$.\footnote{
In \cite[Rem.~1.2]{T}, where this is also stated, it should have been assumed that the local structure theorem holds for $G/H$.}

\begin{lem}\label{lem.local_structure}
Let $X$ be a toroidal $G/H$-embedding for which the local structure theorem holds. 
\begin{enumerate}[{\rm(i)}]
\item Every $G$-orbit intersects $\ov{T\cdot x}^{X_0}$ in a $T$-orbit, and 
$\ov{T\cdot x}^{X_0}$ is a $T/H_T$-embedding whose fan is the same as that of $X$.
Furthermore, all $G$-orbit closures of $X$ are normal and the local structure theorem also holds for them (with the same $T,B,P$).
\item If $X$ is complete, then every closed $G$-orbit is isomorphic to $G/P^-$.
\end{enumerate}
\end{lem}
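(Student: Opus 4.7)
The plan is to exploit the isomorphism $R_uP \times \ov{T\cdot x}^{X_0} \stackrel{\sim}{\to} X_0$ from the local structure theorem, reducing questions about $G$-orbits on $X$ to questions about $T$-orbits on the toric slice $\ov{T\cdot x}^{X_0}$. Under this isomorphism, the $P$-orbits on $X_0$ correspond bijectively to the $T$-orbits on $\ov{T\cdot x}^{X_0}$, since $R_uP$ acts freely by left multiplication on the first factor while $M$ acts on the second through its torus quotient $M/DM$, onto which $T$ surjects.

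For part (i), I would first show that every $G$-orbit meets $X_0$ using the toroidal hypothesis: the complement $X\sm X_0$ is the union of the $B$-stable prime divisors meeting $G/H$, none of which contains a $G$-orbit, so no $G$-orbit lies entirely in the complement. Any $G$-orbit $\mc O$ therefore meets $X_0$ in a nonempty $P$-stable set, which under the isomorphism has the form $R_uP\times Z$ for some $T$-stable $Z\subseteq\ov{T\cdot x}^{X_0}$. To see that $Z$ is a single $T$-orbit, I would use the bijection between $G$-orbits of $X$ and cones of its coloured fan, which in the toroidal case agrees with the fan of the toric $T/H_T$-embedding $\ov{T\cdot x}^{X_0}$: under the orbit-to-cone correspondence, $G\cdot y$ and $T\cdot y$ are assigned to the same cone, yielding the orbit bijection. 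That $\ov{T\cdot x}^{X_0}$ is a normal $T/H_T$-embedding follows from normality of $X_0$ together with the product structure, and its fan then equals that of $X$ by the orbit matching. For normality of an orbit closure $Y=\ov{G\cdot y}$ I would note that $Y\cap X_0 \cong R_uP\times Z'$, where $Z'$ is the closure of $T\cdot y$ in $\ov{T\cdot x}^{X_0}$, a normal toric variety; hence $Y\cap X_0$ is normal and $Y$ is covered by $G$-translates thereof. The same product description provides the local structure theorem for $Y$ with the same $T,B,P$.

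For part (ii), let $\mc O$ be a closed $G$-orbit; by (i) it meets the slice in a single $T$-orbit $T\cdot y$, closed in $\ov{T\cdot x}^{X_0}$. The key completeness input is that for a complete toroidal $G/H$-embedding the support of the fan equals the valuation cone $\mc V$, which is full-dimensional since it spans $\Lambda_{\mb Q}^*$; hence the cone attached to $\mc O$, being a maximal cone, has full dimension, forcing $T\cdot y$ to be a single point. With $y$ now $T$-fixed, the stabiliser $G_y$ contains $M$, and since $\mc O$ is complete $G_y$ is parabolic. Because $R_uP$ acts freely on $X_0$, $G_y \cap R_uP = 1$. The unique parabolic of $G$ containing $M$ and having trivial intersection with $R_uP$ is $P^-$, so $G_y = P^-$ and $\mc O\cong G/P^-$.

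The step requiring the most care is the completeness input in (ii): one must justify that in a complete toroidal embedding every maximal cone of the fan is full-dimensional in $\Lambda_{\mb Q}^*$, not merely maximal among the cones present. This reduces to the classical completeness criterion that a spherical embedding is complete iff its coloured fan exhausts the valuation cone, applied in the colour-free toroidal setting. A subtler point in (i) is the matching of the two fan descriptions (coloured fan of $X$ versus ordinary fan of $\ov{T\cdot x}^{X_0}$) so that distinct $T$-orbits in the slice indeed produce distinct $G$-orbits on $X$.
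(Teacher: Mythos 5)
Your proof follows essentially the same route as the paper: reduce to the toric slice via the local-structure isomorphism for part (i), and in part (ii) use completeness to force a full-dimensional cone, hence a $T$-fixed point $y$, and then identify the stabiliser of $y$ with $P^-$. (The paper, for part (i), simply defers to Timashev's Thm.~29.1 and Brion's notes; for normality it cites KKMS~Prop.~I.2; and in part (ii) it uses Borel's Prop.~AG.18.3(1) to get that the stabiliser is parabolic, then invokes the local structure theorem for $Y$ to pin it down as $P^-$.)

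Two points worth tightening. First, in part (i) your justification that the set $Z$ is a single $T$-orbit appeals to the orbit-to-cone correspondence and the claim that the two fans agree, but the equality of the fans is itself part of what is being established; as stated the argument is mildly circular. The non-circular route is to show directly from the product isomorphism $R_uP \times \ov{T\cdot x}^{X_0} \stackrel{\sim}{\to} X_0$ that the boundary divisors of $X$ meet the slice in the boundary divisors of the slice, with matching $B$- resp.\ $T$-semi-invariant valuations, and only then read off the fan equality and the orbit matching. Second, in part (ii) the assertion that completeness of the closed orbit $\mc O$ forces $G_y$ to be parabolic is not automatic from a bijective morphism onto a complete variety; you should cite (as the paper does) Borel, Prop.~AG.18.3(1), or argue that the orbit map is proper. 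Your identification $G_y=P^-$ via $M\subseteq G_y$ and $G_y\cap R_uP=\{1\}$ is a correct, slightly different phrasing of the paper's appeal to the local structure theorem for $Y$, and is fine once one notes that a parabolic containing $T$ contains, for each root, one of $U_\alpha$, $U_{-\alpha}$.
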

\begin{proof}
(i).\ The first assertion follows by the same arguments as in the proof of \cite[Thm.~29.1]{Tim} or in \cite[Sect.~2.4]{Bri3}.
The normality of the $G$-orbit closures follows from the toric case \cite[Prop.~I.2]{KKMS} and the final assertion is obvious.\\
(ii).\ Let $Y$ be a closed $G$-orbit in $X$ and let $X'$ be the corresponding simple open sub-embedding of $X$. Then $Y$ is complete.
So the cone in $\Lambda_{\mb Q}^*$ corresponding to $X'$ has dimension $\dim\Lambda_{\mb Q}^*=\rk\mc X(T/H_T)=\dim T/H_T$, by \cite[Thm.~6.3]{Knop}. 
So $\ov{T\cdot x}^{X'_0}$ has a unique $T$-fixed point $y$ which must be the intersection of $Y$ with $\ov{T\cdot x}^{X'_0}$. 
Let $P_0\le G$ be the stabiliser of $y$, then we have a bijective $G$-equivariant morphism $\varphi:G/P_0\to G\cdot y=Y$. 
By \cite[Prop.~AG.18.3(1)]{Bo}, $G/P_0$ is complete, i.e. $P_0$ is a parabolic.
Since $P_0$ contains $T$, we have that for every root of $T$ in $G$, at least one of the root subgroups $U_\alpha$, $U_{-\alpha}$ is contained in $P_0$.
It is then obvious from the local structure theorem for $Y$, that $P_0=P^-$ and that $\varphi$ is an isomorphism.
\end{proof}

A {\it wonderful compactification} of $G/H$ is a smooth simple complete toroidal $G/H$-embedding. Since it is is simple and toroidal it is determined by a cone in $\mc V$.
Since it is also complete the cone has to be all of $\mc V$, see \cite[Thm.~4.2]{Knop}. So a wonderful compactification is unique if it exists.
Furthermore, since cones in coloured fans of $G/H$-embeddings are always strictly convex, $\mc V$ contains no line. If $X$ is a wonderful compactification of $G/H$
for which the local structure theorem holds, then $\ov{T\cdot x}^{X_0}$ (and therefore $X_0$) is an affine space.

We will also need the following property of a smooth $G/H$-embedding $X$:

\medskip
{\it\noindent The anti-canonical bundle $\omega_X^{-1}$ has a nonzero $B$-semi-invariant global section of weight $2\rho_P$ with divisor $$\sum_Da_DD+\sum_iX_i\,,\eqno{(*)}$$
where the first sum is over the $B$-stable prime divisors of $X$ that are not $G$-stable
with the $a_D$ strictly positive and the second sum is over the boundary divisors of $X$.}
\medskip

If $X$ is a toroidal smooth $G/H$-embedding which satisfies the local structure theorem, then it is easy to see
that $\omega_X^{-1}$ has a nonzero $B$-semi-invariant global section of weight $2\rho_P$, unique up to scalar multiples,
and that its divisor is given by the formula above with the $a_D$ integers.
So the crucial point of (*) is that the coefficients $a_D$ are strictly positive.

\begin{lem}[{cf. proof of \cite[Prop.~4.1]{Bri1}}]\label{lem.candiv}
Let $X$ be a smooth $G/H$-embedding. Assume the local structure theorem holds for the
toroidal embedding obtained by removing all orbits of codimension $>1$ from $X$,
and assume the $T$-orbit map of $x$ is separable, then $X$ has property (*).
\end{lem}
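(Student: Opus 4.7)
My plan is to reduce to the case where $X$ is itself toroidal, then to exhibit the section via the local structure theorem, and finally to verify the strict positivity of the $a_D$ by a local computation at each color.

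\textbf{Reduction to the toroidal case.} Let $X^\circ$ denote the open toroidal $G/H$-embedding of $X$ obtained by deleting all $G$-orbits of codimension $>1$; by hypothesis the local structure theorem holds for $X^\circ$. Since $X$ is smooth and $X\setminus X^\circ$ has codimension $\ge 2$, every regular section of $\omega_X^{-1}$ on $X^\circ$ extends uniquely to a regular section on $X$ by normality, and prime divisors transfer by Zariski closure. Moreover the $B$-stable non-$G$-stable prime divisors and the boundary divisors of $X^\circ$ are in bijection with those of $X$ under closure. Thus it suffices to prove $(*)$ for $X^\circ$, and we may assume $X$ itself is toroidal.

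\textbf{Construction of the section.} As the paragraph preceding the lemma already remarks, once $X$ is smooth, toroidal and satisfies the local structure theorem, it is straightforward to produce a nonzero $B$-semi-invariant section $s\in H^0(X,\omega_X^{-1})$ of weight $2\rho_P$, unique up to scalars, whose divisor has the form $\sum_D a_D D+\sum_i X_i$ for some integers $a_D$. Concretely, the isomorphism $X_0\cong R_uP\times\ov{T\cdot x}^{X_0}$ furnished by the local structure theorem, combined with the facts that $\omega_{R_uP}^{-1}$ carries a $B$-semi-invariant trivialising section of weight $2\rho_P$ and that the anti-canonical section of the smooth toric variety $\ov{T\cdot x}^{X_0}$ has divisor equal to the sum of all its $T$-stable prime divisors, produces such a section on $X_0$ with divisor $\sum_i X_i|_{X_0}$ and extends to $X$ in the stated form. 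The separability of the $T$-orbit map of $x$ is exactly what ensures that the identification $T\cdot x\cong T/H_T$ is scheme-theoretic, so that $\omega_{T\cdot x}$ has trivial $T$-character and the $T$-weight of $s$ is precisely $2\rho_P$. What remains is the strict inequality $a_D>0$ for every color $D$.

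\textbf{Strict positivity, the main obstacle.} For each $B$-stable non-$G$-stable prime divisor $D$ one must show $a_D\ge 1$. Following the argument referenced in \cite[Prop.~4.1]{Bri1}, I would fix such a $D$ together with a simple root $\alpha$ for which the minimal parabolic $P_\alpha\supseteq B$ does not stabilise $D$ --- such an $\alpha$ exists precisely because $D$ is not $G$-stable --- and apply a local structure argument on a $B$-stable affine neighborhood $V$ of a generic point of $D$ built from a $P_\alpha$-saturation of $X_0$. Expressing $s$ in both the $X_0$- and $V$-trivialisations of $\omega_X^{-1}$ and comparing them across the overlap computes the order of $s$ along $D$ as a pairing of the $T$-weight $2\rho_P$ with an appropriate coroot-type functional attached to $D$, which turns out to be strictly positive. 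Separability of the $T$-orbit map is used again to ensure these weight identifications are not spoiled by inseparability phenomena. This local comparison, with its careful tracking of the $B$-action on coordinates around the generic point of $D$, is where the genuine work of the proof lies.
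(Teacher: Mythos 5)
Your reduction to the toroidal case and the weight/uniqueness assertions about the section are fine, but the heart of the lemma --- that every colour $D$ appears with strictly positive coefficient --- is exactly the step you leave unproven. You write that comparing the $X_0$- and $V$-trivialisations computes $a_D$ as a pairing of $2\rho_P$ with ``an appropriate coroot-type functional attached to $D$, which turns out to be strictly positive,'' but you never exhibit that functional or verify positivity. This is not a formality: the value of $a_D$ depends on the type of colour, the computations are delicate (cf.\ the paper's Remark~1.5.2 and the reference to Luna), and in characteristic $p$ one must track separability throughout. As written, your argument establishes $a_D\ge 0$ but not $a_D\ge 1$, and $a_D\ge 1$ is the whole content of property $(*)$.

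The paper sidesteps this by constructing the section differently. Rather than trivialising $\omega_X^{-1}$ over $X_0$ via the local structure theorem, one takes the Lie algebra homomorphism $\varphi:\Lie(B)\to\Vect(X)$ coming from the $B$-action and sets $\theta$ equal to the wedge of the $\varphi$-images of a basis of $\Lie(R_uP)$ together with lifts to $\Lie(T)$ of a basis of $\Lie(T\cdot x)$; separability of the $T$-orbit map is invoked precisely to produce those lifts (not, as you suggest, to make $T\cdot x\cong T/H_T$ scheme-theoretic, though that is a related consequence). The section $\theta$ is visibly $B$-semi-invariant of weight $2\rho_P$, and the crucial observation is that $\varphi(u)_y\in T_y(B\cdot y)$ for all $y$. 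Since $\dim B\cdot y<\dim B\cdot x=\dim R_uP+\dim T\cdot x$ whenever $y$ lies off the open $B$-orbit, the wedge of these $\dim B\cdot x$ vector fields is forced to vanish at every such $y$ --- in particular along every $B$-stable prime divisor, giving $a_D>0$ with no local coordinate computation at all. The order-one vanishing along boundary divisors then follows from standard toric geometry. This vector-field construction yields (a scalar multiple of) the same section you describe, but makes strict positivity an immediate consequence of orbit-dimension drop, which is exactly the point your sketch leaves open.
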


\begin{proof}
Since the proof in \cite{Bri1} is very brief we give a bit more detail. We reduce as in [loc. cit.] to the case that $X$ is smooth and toroidal
by removing the orbits of codimension $>1$. Then we have the local structure theorem $R_uP\times \ov{T\cdot x}^{X_0}\stackrel{\sim}{\to}X_0$.
If an algebraic group $K$ acts on $X$ by automorphisms, then $\Lie(K)$ acts on $\mc O_X$ by derivations, put differently, we have a Lie algebra homomorphism
$\varphi:\Lie(K)\to\Vect(X)$. Furthermore, 
$$\varphi(u)_y\in T_y(K\cdot y) \eqno(\dagger)$$
for all $u\in\Lie(K)$ and $y\in X$ and $T_y(-)$ is the Zariski tangent  space at $y$. We have
$\dim(T_y(B\cdot y))=\dim(B\cdot y)<\dim(B\cdot x)=\dim(R_u P)+\dim(T\cdot x)$ for all $y\in X\sm B\cdot x$. Let $\theta$ be the wedge product of
the $\varphi$-images of a basis of $\Lie(R_uP)$ together with a lift to $\Lie(T)$ of a basis of $\Lie(T\cdot x)$.
Here the lift of the basis of $\Lie(T\cdot x)$ to elements in $\Lie(T)$ exists by the separability assumption.
Clearly $\theta$ is $B$-semi-invariant of weight $2\rho_P$.
By ($\dagger$) applied to $K=B$ we have that $\theta$ vanishes on the complement
of the open $B$-orbit $B\cdot x=R_u(P)T\cdot x$, i.e. on the union of $B$-stable prime divisors.
Furthermore, it is well known from toric geometry, using the local structure theorem, that it vanishes to the order one on
the boundary divisors and it is easy to see that it is nonzero on $R_u(P)T\cdot x$.
So $\theta$ is a global section of the anti-canonical bundle with divisor
of zeros $\sum_Da_DD+\sum_iX_i$, $a_D>0$.
\end{proof}

For any smooth $G/H$-embedding $X$ with boundary divisors $X_i$ we put $\tilde\omega_X=\omega_X\ot\mc O_X(\sum_iX_i)$.
Like $\omega_X$, the sheaf $\tilde\omega_X$ comes with a canonical $G$-linearisation.
When $X$ is also toroidal $\tilde\omega_X$ is the log canonical sheaf. Note that if $X$ has property (*), then we have $\tilde\omega_X=\mc O_X(-\sum_Da_DD)$.

Let $H_{\rm a}$ (``$\rm a$" for adjoint, see Section~\ref{s.symmetric}) be a closed subgroup scheme of $G$ containing $H$ and
let $x_{\rm a}\in G/H_{\rm a}$ be the image of $e$ under the canonical map $G\to G/H_{\rm a}$.
From the next section on we will make the following assumptions; see Remark~\ref{rems.candiv}.1 for further comments.

\smallskip
\begin{enumerate}[({\rm A}1)]
\item $H_{\rm a}$ is generated by $H$ and a closed subgroup scheme of $T$ that normalises $H$;
\item the homogeneous space $G/H_{\rm a}$ has a wonderful compactification $\bf X$ for which the local structure theorem (see Section~\ref{s.prelim}) holds;
\item the wonderful compactification $\bf X$ of $G/H_{\rm a}$ has property (*) above;
\item the characters through which $H$ acts on the top exterior powers of $T_x(G/H)$ and $T_{x_{\rm a}}(G/H_{\rm a})$ are the same.
\end{enumerate}

From (A1) and (A2) it follows easily that the local structure theorem holds for all toroidal $G/H$-embeddings. One just has to use the fact that for every toroidal
$G/H$-embedding $X$ there is a unique $G$-equivariant morphism $X\to\bf X$ which maps $x$ to $x_{\rm a}$, see \cite[Thm.~4.1]{Knop}.
This was also observed (in more specific situations) in \cite[Prop.~6.2.3(i)]{BriKu} and \cite[Prop.~3.2(i)]{T}.

Concerning (A4), to see that $H$ acts on $T_x(G/H)$ it is easiest to use the ``functor of points" approach and dual numbers. For a commutative $k$-algebra $R$ put $R[\delta]=R[t]/(t^2)$,
where $t$ is an indeterminate and $\delta=t+(t^2)$. Let $X$ be a smooth scheme over $k$.
Then we can define the ``tangent bundle functor" $$T(X):\{\text{commutative $k$-algebras}\}\to\{\text{sets}\}$$ as follows: $T(X)(R)=X(R[\delta])$.
Furthermore, we obtain a morphism $:T(X)\to X$ of functors given by the homomorphism $R[\delta]\to R$ which maps $\delta$ to $0$.
Let $x\in X(k)$. Then the fiber over $x$ is $T_x(X)$, see \cite[Cor.~II.4.3.3]{DG} or \cite[AG.16.2]{Bo}.
If $H$ acts on $X$, see \cite[Sect.~II.1.3]{DG} or \cite[I.2.6]{Jan}, then it also acts on $T(X)$ and the above morphism is $H$-equivariant. So if $H$ fixes $x$,
then $H$ acts on $T_x(X)$. This action is clearly linear, so $H$ will then also act on the top exterior power of $T_x(X)$, see \cite[I.2.7(3)]{Jan}.

Note that if $f:G\to G'$ is a central (cf. \cite[V.2.2]{Bo}) surjective morphism of reductive groups, $H'\subseteq H'_{\rm a}$ closed subgroup schemes of $G'$
and $H,H_{\rm a}$ the schematic inverse image of $H',H'_{\rm a}$ under $f$,
then $G,H,H_{\rm a}$ satisfy (A1)-(A4), if $G',H',H'_{\rm a}$ do (relative to $B'=f(B)$, $T'=f(T)$ etc.).
We denote the boundary divisors of $\bf X$ by ${\bf X}_1,\ldots,{\bf X}_r$.
Note that by the adjunction formula (see e.g. \cite[Prop.~II.8.20]{H2}) the restriction of $\tilde\omega_{\bf X}$ to $G/P^-$ is isomorphic to $\omega_{G/P^-}$.

\begin{lem}\label{lem.pull-back}
Assume (A1)-(A4) and let $X$ be a smooth toroidal $G/H$-embedding. Consider the morphism $\pi:X\to\bf X$ which extends the canonical map $G/H\to G/H_{\rm a}$
Then $\tilde\omega_X$ is $G$-equivariantly isomorphic to the pull-back $\pi^*\tilde\omega_{\bf X}$.
In particular, all smooth $G/H$-embeddings have property (*).
\end{lem}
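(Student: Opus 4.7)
My plan is to establish the $G$-equivariant isomorphism $\tilde\omega_X\cong\pi^*\tilde\omega_{\bf X}$ by trivializing the $G$-linearized line bundle $L:=\tilde\omega_X\otimes(\pi^*\tilde\omega_{\bf X})^{-1}$ on the $B$-stable open set $G/H\cup X_0$, which meets every $G$-orbit of $X$, and then extending by $G$-equivariance to all of $X$. The two local trivializations come from (A4) on the open orbit $G/H$ and from toric geometry applied to the slice $X_0$ of the local structure theorem.

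On $G/H$, the sheaves $\tilde\omega_X$ and $\pi^*\tilde\omega_{\bf X}$ restrict to $\omega_{G/H}$ and the pullback of $\omega_{G/H_{\rm a}}$ respectively, since the boundary divisors do not meet the open orbit. These are the $G$-linearized homogeneous line bundles associated to the $H$-characters on $\bigwedge^{\rm top}T_x(G/H)$ and on $\bigwedge^{\rm top}T_{x_{\rm a}}(G/H_{\rm a})$ (restricted to $H$) respectively. Assumption (A4) asserts precisely that these two $H$-characters coincide, so we obtain a $G$-equivariant trivialization $L|_{G/H}\cong\mc O_{G/H}$.

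On $X_0$, the local structure theorem for $X$ (inherited from $\bf X$ via (A1)--(A2)) gives $X_0\cong R_uP\times V$ with $V=\ov{T\cd x}^{X_0}$ a smooth toric variety for $T/H_T$, and similarly ${\bf X}_0\cong R_uP\times V_{\rm a}$, with $\pi|_{X_0}$ acting as the identity on $R_uP$ and as a toric morphism $\phi:V\to V_{\rm a}$. Both $\tilde\omega_X|_{X_0}$ and $\pi^*\tilde\omega_{\bf X}|_{X_0}$ split as $\mathrm{pr}_{R_uP}^*\omega_{R_uP}\otimes\mathrm{pr}_V^*(\text{log canonical of the toric factor})$. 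The log canonical of a smooth toric variety is canonically trivial as a $T$-linearized line bundle of weight $0$ via the invariant top form $dy_1/y_1\wedge\cdots\wedge dy_n/y_n$, and this triviality is inherited by the pullback $\phi^*\tilde\omega_{V_{\rm a}}$, so $\tilde\omega_V$ and $\phi^*\tilde\omega_{V_{\rm a}}$ are $T$-equivariantly isomorphic of weight $0$. The $R_uP$-factor $\omega_{R_uP}$ contributes the same $B$-linearization on both sides (weight $2\rho_P$, trivialized by $d\xi_1\wedge\cdots\wedge d\xi_N$, which is $U$-translation invariant). Hence $L|_{X_0}$ is trivial as a $B$-linearized line bundle of weight $0$. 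The two local trivializations of $L$ restrict to $B$-semi-invariant sections of weight $0$ on the open $B$-orbit $B\cd x$, are therefore proportional, and glue after rescaling to $L\cong\mc O$ on $G/H\cup X_0$, extending by $G$-equivariance to all of $X$.

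The ``in particular'' claim then follows by taking the $B$-semi-invariant section $\theta_{\bf X}$ of $\tilde\omega_{\bf X}^{-1}$ of weight $2\rho_P$ with divisor $\sum_D a_D D$ provided by (A3), pulling it back through the established isomorphism, and multiplying by the canonical section of $\mc O_X(\sum_i X_i)=\tilde\omega_X\otimes\omega_X^{-1}$; each color of $\bf X$ pulls back to a positive $\mb Z$-combination of colors of $X$ (since $\pi^{-1}(G/H_{\rm a})=G/H$ and $\pi$ is equivariant), so the divisor of the resulting section of $\omega_X^{-1}$ has the form $\pi^*(\sum_D a_D D)+\sum_i X_i$ required by (*). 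The delicate step I expect is the $X_0$-computation: tracking the $B$-linearizations on the $R_uP$- and toric-factors carefully enough to ensure that $L|_{X_0}$ really is $B$-equivariantly trivial of weight zero (both sides producing the same $B$-weight $2\rho_P$), which is what makes the patching of the trivializations along $B\cd x$ possible.
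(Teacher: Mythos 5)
Your argument is essentially the paper's, phrased slightly differently: where you trivialize the $G$-linearized quotient bundle $L=\tilde\omega_X\otimes(\pi^*\tilde\omega_{\bf X})^{-1}$ on $G/H\cup X_0$, the paper compares the divisors of the (unique up to scalar) $B$-semi-invariant rational sections $s_1$, $s_2'$ of $\tilde\omega_X$ and $\pi^*\tilde\omega_{\bf X}$ of weight $-2\rho_P$ (your trivializing section is exactly $s_1/s_2'$). The local structure theorem on $X_0$ plays the same role in both (showing the divisor contains no boundary divisor), (A4) plays the same role on the open orbit (the paper implements it by pulling back to $G$ and using ${\rm Pic}_G(G)=0$ to reduce to $(B\times H)$-semi-invariant functions of the same weight), and the passage to a genuinely $G$-equivariant isomorphism is in both cases the observation that two $G$-linearizations differ by a character, which is trivial since it is trivial on $B$.

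One real gap, though: the ``in particular'' asserts property (*) for \emph{all} smooth $G/H$-embeddings, but your argument only covers the toroidal case, since the isomorphism $\tilde\omega_X\cong\pi^*\tilde\omega_{\bf X}$ is proved only there and you apply it directly. The missing step (which the paper states) is to first remove from a smooth $X$ the $G$-orbits of codimension $>1$: what remains is a smooth toroidal embedding $X'$ on which your pulled-back section of $\omega_{X'}^{-1}$ has divisor $\sum_Da_DD+\sum_iX_i$ with $a_D>0$, and since $X\setminus X'$ has codimension $\geq 2$ this section extends to $X$ with the same divisor. Also, ``extending by $G$-equivariance to all of $X$'' is a slight misnomer for what is actually happening: your nowhere-zero $B$-invariant section of $L$ on $G/H\cup X_0$ extends to a nowhere-zero global section because that open set already contains the generic point of every prime divisor of $X$ (the colors meet $G/H$ and the boundary divisors meet $X_0$), so its divisor on $X$ is $0$; $G$-equivariance then follows by the character argument, not by translation. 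Finally, your justification that every color of $X$ appears with a strictly positive coefficient in $\pi^*(\sum a_{\bf D}{\bf D})$ is a bit terse; the point is that $\pi^{-1}(B\cdot x_{\rm a})=B\cdot x$, so the union of the colors of $G/H$ is exactly the preimage of the union of the colors of $G/H_{\rm a}$, whence each color of $X$ is a component of some $\pi^{-1}({\bf D})$.
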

\begin{proof}
Let $s_1$ and $s_2$ be the (unique up to scalar multiples) nonzero $B$-semi-invariant rational section of $\tilde\omega_X$ resp. $\tilde\omega_{\bf X}$ of
weight $-2\rho_P$. They are the nonzero $B$-semi-invariant rational sections of $\omega_X$ and $\omega_{\bf X}$ multiplied with 
canonical sections of the boundary divisors of $X$ resp. ${\bf X}$.
Let $s_2'$ be the pull-back of $s_2$ to $\pi^*\tilde\omega_{\bf X}$.
First we show that the divisor $(s_1)$ equals $(s_2')$. From the local structure theorems for $X$ and ${\bf X}$ it is clear that $(s_1)$ and $(s_2')$
do not involve the boundary divisors of $X$, so it is enough to show that $(s_1)$ and $(s_2')$ are equal on $G/H$.
Since the equivariant Picard group ${\rm Pic}_G(G)$ is trivial $s_1$ and $s_2'$ pull back to $(B\times H)$-semi-invariant functions on $G$.
Since they have the same $B$-weight $-2\rho_P$ it is enough to show they have the same $H$-weight and this follows from (A4).

Since $(s_1)$ equals $(s_2')$, there is an isomorphism between $\tilde\omega_X$ and $\pi^*\tilde\omega_{\bf X}$ such that $s_1$ corresponds to $s_2'$.
Since any two $G$-linearisations of a line bundle on $X$ differ by a character of $G$ and $s_1$ and $s_2'$ have the same weight
it is clear that this isomorphism is $G$-equivariant.

The final assertion follows by reducing to the case that $X$ is (smooth and) toroidal by removing the orbits of codimension $>1$.
\end{proof}

Note that if we define a canonical divisor of a $G/H$-embedding $X$ to be a divisor whose restriction to the smooth locus $X'$ of $X$
is a canonical divisor of $X'$, then, assuming (A1)-(A4), a canonical divisor of any $G/H$-embedding $X$ is given by the formula in (*).

\begin{rems}\label{rems.candiv}
1.\ Of the assumptions (A1)-(A4), (A2) is the most important one; it is for example not satisfied by the varieties of unseparated flags from \cite{HL}.
It is essential that in the local structure theorem the isomorphism is given by the action of $G$.
If $H$ and $H_{\rm a}$ are reduced we can deduce (A4) as follows. We have canonical isomorphisms $T_x(G/H)\cong\Lie(G)/\Lie(H)$ and
$T_{x_{\rm a}}(G/H)\cong\Lie(G)/\Lie(H_{\rm a})$ of $H$-modules and we obtain an exact sequence of $H$-modules
$$0\to\Lie(H_{\rm a}/H)\to T_x(G/H)\to T_{x_{\rm a}}(G/H_{\rm a})\to0\,,$$
where we used that, by (A1), $H$ is normal in $H_{\rm a}$.
So the top exterior power of $T_x(G/H)$ is the tensor product of the top exterior powers of $T_{x_{\rm a}}(G/H_{\rm a})$ and $\Lie(H_{\rm a}/H)$.
The action of $H$ on $\Lie(H_{\rm a}/H)$ is trivial, since the $H$-action on $H_a/H$ induced by the conjugation action of $H$ on $H_a$ is trivial. Thus (A4) follows.
Now (A4) holds in characteristic $0$ by Cartier's Theorem \cite[II.6.1.1]{DG}.
I do not know of any examples where (A1) and (A2) are satisfied, but not (A3) or (A4).\\
2.\ In \cite{Bri1} (see also \cite[Prop.~3.6]{Luna}) the coefficients $a_D$ from (*) are determined and, at least in many special cases, this can also be done in prime characteristic.
For example, when $p>2$ and $G/H$ is a symmetric space $G/G^{\theta}$, $G$ adjoint semi-simple (see also Section~\ref{s.symmetric}), then this amounts to writing the canonical
divisor plus the sum of the boundary divisors as a linear combination of the images in ${\rm Pic}({\bf X})$ of the $B$-stable prime divisors of ${\bf X}$ that are not $G$-stable,
where ${\bf X}$ is the wonderful compactification of $G/H$ (these form basis of ${\rm Pic}({\bf X})$).
Since ${\rm Pic}({\bf X})$ embeds naturally in ${\rm Pic}(G/P^-)$, $G/P^-$ the closed orbit, via restriction (see \cite{DeC-Spr}), and the line bundle corresponding to the above divisor
restricts to the canonical bundle of ${\rm Pic}(G/P^-)$ this amounts to writing $-2\rho_P$
as a linear combination of the above images. In the notation of \cite[Prop 26.22]{Tim} these images are: the $\hat{\varpi}_i$, $i$ a $\iota$-fixed node of the Satake diagram,
and the $\varpi_j+\varpi_{\iota(j)}$, $j$ a $\iota$-unstable node of the Satake diagram, except that we split the weights in the second list
that correspond to the exceptional simple roots (see \cite[Sect.~4]{DeC-Spr}) into their two summands.\\
3.\ In characteristic $0$ all orbit closures in $G/H$-embeddings are normal, see \cite[Prop.~3.5]{BriP}.
The following example in characteristic $p$ was mentioned to me by M.~Brion. Put $G=\SL_2\times k^\times\times k^\times$, $V=k^2$, and $W=V\oplus V$.
Let $G$ act on $W$ by $(g,\lambda,\mu)\cdot (v,w)=(\lambda\,g\cdot v,\mu {\rm Fr}(g)\cdot w)$, where ${\rm Fr}$ is the Frobenius morphism which raises all matrix entries to the power $p$.
Now let $\la-,-\ra$ be the symplectic form on $V$ given by $\la v,w\ra=v_1w_2-v_2w_1$. This form is preserved by $\SL_2$.
Now define $f\in k[W]$ by $f(v,w)=\la {\rm Fr}(v),w\ra$, where ${\rm Fr}$ is the Frobenius morphism of $k^2$. Then $W$ is spherical and the zero locus of $f$
is a non-normal orbit closure, since its singular locus has codimension $1$. 
See \cite[Cor to Prop.~2.1]{T} for a result that guarantees the normality of the $G$-orbit closures under a certain assumption.
\end{rems}

\section{Frobenius splittings and rational resolutions}\label{s.frobsplit_ratres}
We retain the notation from the previous sections and we assume that properties (A1)-(A4) hold. We will consider the following further assumptions

\smallskip
\begin{enumerate}[({\rm B}1)]
\item the restriction map $H^0({\bf X},\tilde\omega_{\bf X}^{-1})\to H^0(G/P^{-},\omega_{G/P^{-}}^{-1})$ is surjective;\medskip
\item the restriction map $H^0({\bf X},\tilde\omega_{\bf X}^{1-p})\to H^0(G/P^{-},\omega_{G/P^{-}}^{1-p})$ is surjective and its kernel has a good filtration.
\end{enumerate}

Recall that a line bundle is called {\it semi-ample} if some positive power of it is generated by its global sections.
\begin{prop}\label{prop.toroidal}
Assume (B1) and let $X$ be a smooth toroidal $G/H$-embedding. Then
\begin{enumerate}[{\rm(i)}]
\item $X$ is Frobenius split by a $(p-1)$-th power which compatibly splits all $G$-stable closed subvarieties.
\item Assume $X$ is projective over an affine, let $\mc L$ be a semi-ample line bundle on $X$ and let $Y$ be a $G$-stable closed subvariety of $X$.
\begin{enumerate}[{\rm(a)}]
	\item $H^i(Y,\mc L)=0$ for all $i\ge 1$.
	\item If $X$ is simple or $Y$ is irreducible, then the restriction map $H^0(X,\mc L)\to H^0(Y,\mc L)$ is surjective.
\end{enumerate} 
\end{enumerate}
\end{prop}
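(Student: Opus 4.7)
The plan is to build a single section $\sigma\in H^0(X,\omega_X^{-1})$ whose $(p-1)$st power gives the desired splitting, by transporting a splitting from the closed orbit $G/P^-$ up the chain $G/P^-\hookrightarrow{\bf X}\xleftarrow{\,\pi\,}X$. By Lemma~\ref{lem.GmodP} there exists $\tau\in H^0(G/P^-,\omega_{G/P^-}^{-1})$ with $\tau^{p-1}$ a Frobenius splitting of $G/P^-$. Assumption (B1) provides a lift $\tilde\tau\in H^0({\bf X},\tilde\omega_{\bf X}^{-1})$ restricting to $\tau$ (up to a nonzero scalar), and the $G$-equivariant isomorphism $\tilde\omega_X\cong\pi^*\tilde\omega_{\bf X}$ of Lemma~\ref{lem.pull-back} (using the canonical morphism $\pi:X\to{\bf X}$ supplied by (A1)--(A2)) lets us pull back to $\sigma\in H^0(X,\tilde\omega_X^{-1})\subseteq H^0(X,\omega_X^{-1})$. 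Since $\tilde\omega_X^{-1}=\omega_X^{-1}(-\sum_i X_i)$, this $\sigma$ vanishes on every boundary divisor of $X$.

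To apply the splitting criterion \cite[Prop.~1.3.11]{BriKu}, I would pick a closed $G$-orbit $Y$ of $X$, which by Lemma~\ref{lem.local_structure}(ii) is a copy of $G/P^-$. Since $X$ is smooth toroidal, $Y$ is the transverse intersection of the boundary divisors $X_{i_1},\dots,X_{i_r}$ containing it, so the adjunction formula yields $\tilde\omega_X|_Y\cong\omega_Y$. Under this identification, $\sigma|_Y$ equals $\tau$ up to a nonzero scalar by construction. At a smooth point $y\in Y$ at which $\tau$ does not vanish, a choice of local coordinates $t_{i_1},\dots,t_{i_r},u_1,\dots,u_m$ with $t_{i_j}$ defining $X_{i_j}$ gives $\sigma=t_{i_1}\cdots t_{i_r}\cdot v$ for $v$ a local generator of $\omega_X^{-1}$ at $y$. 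The criterion then asserts that $\sigma^{p-1}$ is a Frobenius splitting of $X$ compatibly splitting $X_{i_1},\dots,X_{i_r}$. Running this argument with $Y$ ranging over a closed orbit in each simple open sub-embedding of $X$ accounts for every boundary divisor, and since compatible splitting is preserved under reduced intersection and union \cite[Prop.~1.2.1]{BriKu}, every $G$-orbit closure---and hence every $G$-stable closed subvariety---is compatibly split, proving~(i).

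For (ii), both parts follow formally from the splitting of (i). In (a), applying the Mehta--Van der Kallen theorem \cite[Thm.~1.2.12]{BriKu} to $X$ (projective over an affine, $(p-1)$-th-power split, with $\mc L$ semi-ample) yields $H^i(X,\mc L)=0$ for $i\ge 1$; compatibility of the splitting with $Y$ then furnishes a surjection $H^i(X,\mc L)\twoheadrightarrow H^i(Y,\mc L|_Y)$ \cite[Prop.~1.2.1]{BriKu}, so $H^i(Y,\mc L|_Y)=0$. For (b), surjectivity of $H^0(X,\mc L)\to H^0(Y,\mc L|_Y)$ follows from the long exact cohomology sequence of $0\to\mc I_Y\otimes\mc L\to\mc L\to\mc L|_Y\to 0$ once $H^1(X,\mc I_Y\otimes\mc L)=0$, and this last vanishing is again delivered by the compatible splitting together with part (a); the hypothesis ``$X$ simple or $Y$ irreducible'' serves to guarantee that $\mc I_Y$ is the ideal of a single compatibly split subvariety, avoiding the pathologies that could arise from unions.

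The main obstacle will be the identification that makes the whole argument run, namely verifying that $\sigma|_Y$ really does equal $\tau$ (up to a nonzero scalar) under the adjunction isomorphism $\tilde\omega_X|_Y\cong\omega_Y$. This requires carefully combining the $G$-equivariant pull-back isomorphism of Lemma~\ref{lem.pull-back} with the canonical restriction $\tilde\omega_{\bf X}|_{G/P^-}\cong\omega_{G/P^-}$ noted before that lemma, and tracking $T$-weights to rule out an accidentally zero restriction; this is the step in which (B1) is used in an essential way, as it is precisely what guarantees that $\tau$ can be realised as the restriction of a global section on ${\bf X}$.
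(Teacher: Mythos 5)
Your construction of the splitting section in part~(i) follows the paper's strategy (lift from $G/P^-$ to $\mathbf X$ via (B1), pull back along $\pi$ using Lemma~\ref{lem.pull-back}, multiply by the canonical sections of the boundary divisors, and verify the splitting criterion locally near a closed orbit), and that much is sound. But you are missing the first step the paper makes, namely the reduction to $X$ complete (obtained by replacing $X$ with a smooth toroidal completion, available because the local structure theorem holds for all toroidal $G/H$-embeddings). Without this, your invocation of Lemma~\ref{lem.local_structure}(ii) to identify a closed orbit of $X$ with $G/P^-$ is illegitimate: that lemma requires $X$ complete, and a non-complete $X$ may have no complete closed orbit at all.

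Parts~(ii)(a) and~(ii)(b) have more serious gaps. In~(a) you claim that because $X$ is $(p-1)$-split and $\mc L$ is semi-ample, vanishing follows ``directly'' from the Mehta--van der Kallen theorem. This is not a direct application: semi-ample is strictly weaker than ample, and the standard splitting vanishing (\cite[Thm.~1.2.8]{BriKu}) requires amplitude. The paper's argument is substantially more involved precisely because of this: it uses the divisor $E=(\pi^*(\ov s))$ together with the boundary divisors to produce, via \cite[Lem.~1.4.11]{BriKu}, a split injection $H^i(X,\mc L)\hookrightarrow H^i\big(X,\mc L^{p^r}(\sum_jn_jX_j+mE)\big)$, and then shows that the coefficients $n_j,m$ can be chosen so that the twisted bundle is \emph{ample}, using the expression $E\sim\sum_Da_DD$, the existence of a $B$-stable ample effective divisor, and the fact that the $\mc O_X(D)$ are globally generated because $X$ is toroidal. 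None of this is in your proposal, and it is exactly where the hypotheses on $E$ and the toroidal geometry are consumed. You also assert a surjection $H^i(X,\mc L)\twoheadrightarrow H^i(Y,\mc L|_Y)$ from compatible splitting; no such general surjectivity holds in positive degree, and the paper instead restricts the splitting to $Y$ (a smooth toroidal spherical variety) and reruns the twisting argument there. In~(b), your reduction to $H^1(X,\mc I_Y\otimes\mc L)=0$ is fine as a template, but the claim that this vanishing ``is delivered by the compatible splitting together with part~(a)'' is unsupported: part~(a) says nothing about $\mc I_Y\otimes\mc L$. The actual role of the hypothesis ``$X$ simple or $Y$ irreducible'' is also misidentified; in the paper it is used to obtain an ample divisor $E$ (simple case, via \cite[Cor.~17.20]{Tim} together with a $D$-splitting argument and \cite[Thm.~1.4.8(ii)]{BriKu}) or an ample $B$-stable effective divisor $E'$ not containing $Y$ (irreducible case, via \cite[Cor.~2.3]{Knop2}), followed by a commutative square of split injections.
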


\begin{proof}
(i).\ The local structure theorem holds for all toroidal $G/H$-embeddings, so, by arguing as at the beginning of \cite[Sect.~29.2]{Tim},
$X$ has a toroidal smooth completion. So we may assume $X$ complete as well.
Let $\pi:X\to\bf X$ be as in Lemma~\ref{lem.pull-back}. By Lemma~\ref{lem.local_structure} its restriction to any closed orbit is an isomorphism.
By Lemma~\ref{lem.GmodP} there exists $s\in H^0(G/P^{-},\omega_{G/P^{-}}^{-1})$ with $s^{p-1}$ a splitting of $G/P^-$.
By assumption we can find a lift $\ov s\in H^0({\bf X},\tilde\omega_{\bf X}^{-1})$ of $s$.
Note that by Lemma~\ref{lem.pull-back} we have $\pi^*(\tilde\omega_{\bf X})=\tilde\omega_X$.
Let $\sigma_i$ be the canonical section of $\mc O_X(X_i)$ and put $\tau=\pi^*(\ov s)\prod_i\sigma_i\in H^0(X,\omega_X^{-1})$.
Then $\tau^{p-1}\in H^0(X,\omega_X^{1-p})$ is a splitting of $X$ which compatibly splits the boundary divisors $X_i$
and therefore all $G$-stable closed subvarieties. This follows by considering its local expansion at a point in a closed orbit.
See \cite[Thm.~2]{Rit}, \cite[Thm.~3.1]{Str} or \cite[Thm.~6.1.12, Ex.~1.3.E.4]{BriKu} for more details.\\
(ii)(a).\ Let $\ov s$, the $\sigma_i$ and $\tau$ be as in (i) (restricted to the original $X$) and put $E=(\pi^*(\ov s))$, the divisor of $\pi^*(\ov s)$.
By \cite[Prop.~1.3.11]{BriKu} $E$ is reduced and effective and contains none of the $X_i$, and the splitting given by $\tau$ is compatible with $(\tau)=E+\sum_iX_i$.
By \cite[Lem.~1.4.11]{BriKu} there is for all $j$, $0\le n_j,m<p^r$ and $r\ge0$ a split injection
$$H^i(X,\mc L)\to H^i(X,\mc L^{p^r}(\sum_jn_jX_j+mE))\,.\eqno(_*^*)$$ 
Now $E$ is linearly equivalent to the divisor $\sum_Da_DD$, the $a_D$ as in ($*$), see Lemma~\ref{lem.pull-back}.
Since $X$ is quasiprojective it has a $B$-stable ample effective divisor $E'$ (cf. proof of \cite[Cor.~6.2.8]{Bri3}). We can write $E'=\sum_jn'_jX_j+\sum_Db_DD$.
If we choose $n_j=n_j'$ and $m$ such that $ma_D\ge b_D$ for all $D$, then $\sum_jn_jX_j+mE \sim\sum_jn'_jX_j+\sum_Dma_DD$ will be ample,
since each $\mc O_X(D)$ is generated by its global sections. The latter follows from the fact that no $D$ contains a $G$-orbit, since $X$ is toroidal.
See the argument in the proof of \cite[Prop.~2.2]{Bri3}. Finally, we choose $r$ such that $n_j,m<p^r$. Now the assertion for $Y=X$ follows from ($_*^*$) and \cite[Thm.~1.2.8]{BriKu}.
The case of arbitrary $Y$ follows by observing that $\tau$ restricts to a splitting of $Y$ and that $Y$ is a smooth toroidal spherical variety
which is projective over an affine. Furthermore, the local structure theorem clearly holds for $Y$. So we can apply the same arguments as above.\\
(b).\ First assume $X$ is simple. Then $E\sim \sum_Da_DD$ is ample by \cite[Cor.~17.20]{Tim} and it is compatibly split by $\tau^{p-1}$.
So $\tau^{p-1}$ is an $(p-1)E$-splitting and therefore also an $E$-splitting by \cite[Thm.~1.4.10, Rem.~1.4.2(ii)]{BriKu}.
Furthermore, $E$ contains no $G$-orbit, since $\pi^*(\ov s)$ is nonzero on the closed orbit. So all closed $G$-stable
subvarieties are compatibly $E$-split and the result follows from \cite[Thm.~1.4.8(ii)]{BriKu}.

Now assume $Y$ is irreducible. Then $X$ has an ample $G$-equivariant line bundle which, by \cite[Cor.~2.3]{Knop2},
has a $B$-semi-invariant global section which is nonzero on $Y$. Take $E'$ to be the divisor of this section.
Then $E'$ is $B$-invariant, ample and doesn't contain $Y$. Now we proceed as in the proof of (a) where we have that
$n_j=n_j'=0$ whenever $Y\subseteq X_j$. Since $Y$ is compatibly $(p-1)\big(E+\sum_{X_j\nsupseteq Y}X_j\big)$-split we can combine the arguments from Lemmas 1.4.7 and 1.4.11 in \cite{BriKu}.
We form a commutative square
$$
\xymatrix{
H^i(X,\mc L)\ar[r]\ar[d]& H^i\big(X,\mc L^{p^r}(\sum_jn_jX_j+mE)\big)\ar[d]\\
H^i(Y,\mc L)\ar[r]& H^i\big(Y,\mc L^{p^r}(\sum_jn_jX_j+mE)\big)\quad,
}
$$
where the top horizontal row is ($_*^*$), the bottom horizontal row is ($_*^*$) with $X$ replaced by $Y$, and the vertical arrows are restriction maps.
Then the splittings of the horizontal arrows are compatible, i.e. they form a commutative square with the vertical arrows.
Since the right vertical arrow is surjective, it follows that the same holds for the left vertical arrow.
This is just a consequence of the fact that if a homomorphism of abelian groups $$f:M=M_1\oplus M_2\to N=N_1\oplus N_2$$
with $f(M_i)\subseteq N_i$, $i\in\{1,2\}$, is surjective, then the restrictions
$f|_{M_1}:M_1\to N_1$ and $f|_{M_2}:M_2\to N_2$ are surjective.
\end{proof}

As in \cite[Sect.~3.3]{BriKu} (following Kempf) we define a morphism $f:X\to Y$ of varieties to be {\it rational} if the direct image under $f$ of the structure sheaf of $X$ is that of $Y$ and if the higher direct images are zero, that is, if $f_*(\mc O_X)=\mc O_Y$ and $R^if_*(\mc O_X)=0$ for $i>0$.
Recall that a {\it resolution (of singularities)} of an irreducible variety $X$ is a smooth irreducible variety $\tilde X$ together with a proper birational morphism $\varphi:\tilde X\to X$. Note that if $X$ is normal we have $\varphi_*(\mc O_{\tilde X})=\mc O_X$.
A {\it rational resolution} is a resolution $\varphi:\tilde X\to X$ which is a rational morphism and satisfies $R^i\varphi_*\omega_{\tilde X}=0$ for all $i>0$.

The following lemma was implicit in the proof of \cite[Cor.~6.2.8]{BriKu}.
\begin{lem}\label{lem.vanishing}\
Assume that every projective $G/H$-embedding $X$ has a $G$-equi-variant resolution $\psi:\tilde X\to X$ with $\psi$ projective and $\tilde X$ toroidal
and assume that every such resolution is a rational morphism. Then every projective birational $G$-equivariant morphism $\varphi:\tilde X\to X$ is rational
for every $G/H$-embedding $X$.
\end{lem}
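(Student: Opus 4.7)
The plan is to reduce to the case where $X$ is a projective $G/H$-embedding, apply the hypotheses to a toroidal resolution of $\tilde X$, and conclude via the Leray spectral sequence.

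First, since rationality of $\varphi$ can be tested locally on $X$, Sumihiro's theorem lets us cover $X$ by $G$-stable quasi-projective open sub-$G/H$-embeddings, reducing to $X$ quasi-projective, and then equivariantly embed $X$ into a projective $G/H$-embedding $\ov X$. I would extend $\varphi$ to a projective $G$-equivariant birational morphism $\ov\varphi:\ov{\tilde X}\to\ov X$ between projective $G/H$-embeddings as follows: realise $\tilde X$ as a closed $G$-invariant subvariety of the projectivisation of a $\varphi$-relatively ample $G$-equivariant sheaf $\mc E$ on $X$, extend $\mc E$ to a coherent equivariant sheaf on $\ov X$, and let $\ov{\tilde X}$ be the normalisation of the closure of $\tilde X$ in the resulting projectivisation. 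Since $\tilde X$ is already closed in $\mb P(\mc E)$, this arranges $\ov\varphi^{-1}(X)=\tilde X$, so flat base change identifies the restriction of $R^i\ov\varphi_*\mc O_{\ov{\tilde X}}$ to $X$ with $R^i\varphi_*\mc O_{\tilde X}$, and it suffices to treat $\ov\varphi$. We may therefore assume $X$ and $\tilde X$ are both projective $G/H$-embeddings.

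Now the first hypothesis, applied to the projective $G/H$-embedding $\tilde X$, produces a projective $G$-equivariant resolution $\psi:W\to\tilde X$ with $W$ toroidal. The composite $\varphi\circ\psi:W\to X$ is again a projective $G$-equivariant birational morphism from a smooth toroidal variety, i.e.\ a resolution of $X$ of the type governed by the hypotheses. By the second hypothesis both $\psi$ and $\varphi\circ\psi$ are rational morphisms. The Leray spectral sequence
$$R^p\varphi_*R^q\psi_*\mc O_W\Rightarrow R^{p+q}(\varphi\circ\psi)_*\mc O_W$$
degenerates at $E_2$, because $R^q\psi_*\mc O_W=0$ for $q>0$ and $\psi_*\mc O_W=\mc O_{\tilde X}$, giving $R^p\varphi_*\mc O_{\tilde X}\cong R^p(\varphi\circ\psi)_*\mc O_W$. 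Rationality of $\varphi\circ\psi$ then yields $R^p\varphi_*\mc O_{\tilde X}=0$ for $p>0$ together with $\varphi_*\mc O_{\tilde X}=\mc O_X$. The hard part is the first step: arranging a compatible $G$-equivariant projective completion so that $\ov\varphi$ genuinely extends $\varphi$ and satisfies $\ov\varphi^{-1}(X)=\tilde X$; once both $\psi$ and $\varphi\circ\psi$ are identified as resolutions of projective $G/H$-embeddings of the type in the hypothesis, the Leray argument is purely formal.
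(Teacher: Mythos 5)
Your proof is correct and follows essentially the same route as the paper: reduce to $X$ quasi-projective via Sumihiro, extend $\varphi$ to a projective birational $G$-equivariant morphism $\ov\varphi:\ov{\tilde X}\to\ov X$ of projective $G/H$-embeddings with $\ov\varphi^{-1}(X)=\tilde X$, then resolve $\ov{\tilde X}$ by a toroidal $W$ and conclude via the Grothendieck/Leray spectral sequence, using the hypothesis for both $\psi$ and $\ov\varphi\circ\psi$. The only difference is the compactification step: the paper does this more directly by embedding $X$ and $\tilde X$ as open subsets of projective $G/H$-embeddings $Y,\tilde Y$ and taking $Z$ to be the normalisation of the closure of the graph of $\varphi$ in $\tilde Y\times Y$ (the projectivity of $\varphi$ gives $\Gamma_\varphi$ closed in $\tilde Y\times X$, hence $\varphi^{-1}(X)=\tilde X$), which avoids choosing a relatively ample $G$-equivariant sheaf and extending it equivariantly to $\ov X$.
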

\begin{proof}
Let $X$ be a $G/H$-embedding and let $\varphi:\tilde X\to X$ be $G$-equivariant, projective and birational.
By the Sumihiro Theorem \cite{Sum1},\cite{Sum2} we may assume that $X$ is quasi-projective. 
Then $X$ and $\tilde X$ embed as open subsets in projective $G/H$-embeddings $Y$ and $\tilde Y$. 
Now let $Z$ be the normalisation of the closure of the graph of $\varphi$ in $\tilde Y\times Y$. Then $Z$ is projective and $\tilde X$ identifies with a $G$-stable open subset of $Z$. Furthermore, the natural morphism $Z\to Y$ extends $\varphi$ and we denote it again by $\varphi$. Note that $\varphi^{-1}(X)=\tilde X$. Now let $\psi:\tilde Z\to Z$ be a resolution as above.
Then $\tilde Z$ is projective. Since $\psi$ is a rational morphism, the Grothendieck spectral sequence for $\varphi_*$, $\psi_*$ and $\mc O_{\tilde Z}$ collapses and we obtain $(R^i\varphi_*)(\mc O_Z)=(R^i\varphi_*)(\psi_*\mc O_{\tilde Z})=R^i(\varphi\circ\psi)_*(\mc O_{\tilde Z})$. So the rationality of the morphism $\varphi$ follows from that of $\varphi\circ\psi$.
\end{proof}

\begin{thm}\label{thm.resolution}
Assume (B1) and let $X$ be a $G/H$-embedding. Then 
\begin{enumerate}[{\rm(i)}]
\item $X$ has a $G$-equivariant rational resolution $\varphi:\tilde X\to X$ with $\varphi$ projective and $\tilde X$ quasiprojective toroidal.
\item $X$ is Frobenius split compatible with all $G$-stable closed subvarieties.
\item Assume $X$ is proper over an affine, let $\mc L$ be a semi-ample line bundle on $X$ and let $Y$ be an irreducible $G$-stable closed subvariety of $X$.
\begin{enumerate}[{\rm(a)}]
	\item $H^i(X,\mc L)=0$ for all $i\ge 1$.
	\item The restriction map $H^0(X,\mc L)\to H^0(Y,\mc L)$ is surjective.
\end{enumerate}
\item If $Y$ is a scheme which is projective over an affine and $\varphi:X\to Y$ is a proper morphism, then $R^i\varphi_*(\mc O_X)=0$ for all $i>0$.
\end{enumerate}
\end{thm}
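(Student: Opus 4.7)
The plan is to prove (i) first by constructing a toroidal resolution via coloured-fan refinement, and then to deduce (ii)--(iv) from (i) combined with Proposition~\ref{prop.toroidal}.

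For (i), I would refine the coloured fan of $X$ by discarding all colours and taking a smooth projective subdivision of the resulting uncoloured fan; since this fan is supported in $\mc V$, it defines a toroidal embedding. The resulting $\tilde X$ is smooth, toroidal, quasiprojective, and equipped with a $G$-equivariant projective birational morphism $\varphi:\tilde X\to X$. Normality of $X$ and birationality of $\varphi$ give $\varphi_*\mc O_{\tilde X}=\mc O_X$. Since $R^i\varphi_*\mc O_{\tilde X}$ is local on $X$, I may assume $X$ affine, so that $\tilde X$ becomes projective over an affine; Proposition~\ref{prop.toroidal}(ii)(a) applied to the semi-ample sheaf $\mc O_{\tilde X}$ then gives $H^i(\tilde X,\mc O_{\tilde X})=0$. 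For $R^i\varphi_*\omega_{\tilde X}=0$ I would invoke the Frobenius-splitting form of Grauert--Riemenschneider: the $(p-1)$-th power splitting of $\tilde X$ from Proposition~\ref{prop.toroidal}(i) compatibly splits the exceptional divisor of $\varphi$ (a union of boundary divisors of $\tilde X$), and the argument of \cite[Thm.~1.3.14]{BriKu}, combined with a sufficiently positive pull-back twist to bring Proposition~\ref{prop.toroidal}(ii)(a) into play, supplies the vanishing. This is the main obstacle, since Proposition~\ref{prop.toroidal} gives vanishing only for semi-ample sheaves and $\omega_{\tilde X}$ is not semi-ample in general.

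For (ii), apply (i) and push the splitting $\sigma$ of $\tilde X$ from Proposition~\ref{prop.toroidal}(i) forward under $\varphi$: using $\varphi\circ F_{\tilde X}=F_X\circ\varphi$ and $\varphi_*\mc O_{\tilde X}=\mc O_X$, the map $\varphi_*\sigma$ is an $\mc O_X$-linear splitting $F_{X*}\mc O_X\to\mc O_X$ sending $1$ to $1$. For a $G$-stable closed subvariety $Y\subseteq X$, the compatibility of $\sigma$ with $\varphi^{-1}(Y)_{\rm red}$ together with $\varphi_*\mc I_{\varphi^{-1}(Y)_{\rm red}}\subseteq\mc I_Y$ yields compatibility of $\varphi_*\sigma$ with $Y$.

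For (iii), set $\tilde{\mc L}=\varphi^*\mc L$, which is semi-ample. Rationality of $\varphi$ and the projection formula give $H^i(X,\mc L)=H^i(\tilde X,\tilde{\mc L})$, and Proposition~\ref{prop.toroidal}(ii)(a) yields (a). For (b), choose an irreducible component $\tilde Y_0$ of $\varphi^{-1}(Y)$ that surjects onto $Y$; Proposition~\ref{prop.toroidal}(ii)(b) applied to the irreducible $\tilde Y_0\subseteq\tilde X$ makes $H^0(\tilde X,\tilde{\mc L})\twoheadrightarrow H^0(\tilde Y_0,\tilde{\mc L})$ surjective. This surjection factors as $H^0(X,\mc L)=H^0(\tilde X,\tilde{\mc L})\to H^0(Y,\mc L)\to H^0(\tilde Y_0,\tilde{\mc L})$, whose last arrow is injective because $Y$ is irreducible and $\varphi|_{\tilde Y_0}:\tilde Y_0\to Y$ is surjective (so that $\mc O_Y\hookrightarrow(\varphi|_{\tilde Y_0})_*\mc O_{\tilde Y_0}$, e.g.\ by factoring through the finite birational normalisation of $Y$). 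Hence $H^0(X,\mc L)\twoheadrightarrow H^0(Y,\mc L)$.

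For (iv), let $\varphi_0:\tilde X\to X$ be the rational resolution from (i) and set $\psi=\varphi\circ\varphi_0$. The Grothendieck spectral sequence for $\varphi_*$ and $\varphi_{0*}$, together with rationality of $\varphi_0$, collapses to give $R^i\varphi_*\mc O_X=R^i\psi_*\mc O_{\tilde X}$. Since $Y$ is projective over an affine and $\psi$ is proper, $\tilde X$ is projective over that same affine. Picking a relatively ample $\mc M$ on $Y$, the line bundle $\psi^*\mc M^n$ is semi-ample, and for large $n$ Serre vanishing on the Leray spectral sequence (combined with the projection formula) identifies $H^0(Y,R^i\psi_*\mc O_{\tilde X}\otimes\mc M^n)$ with $H^i(\tilde X,\psi^*\mc M^n)$, which vanishes by Proposition~\ref{prop.toroidal}(ii)(a). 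Global generation of $R^i\psi_*\mc O_{\tilde X}\otimes\mc M^n$ for large $n$ then forces $R^i\psi_*\mc O_{\tilde X}=0$.
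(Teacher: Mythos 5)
Your proofs of (ii), (iii) and (iv) are essentially correct and match the paper's route (in (iv) you unpack Kempf's Lemma by hand and bypass (iii), but the content is the same). The real problem is in (i), and your own diagnosis of where the difficulty lies is off.

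First, a false alarm: you flag the non-semi-ampleness of $\omega_{\tilde X}$ as ``the main obstacle''. It is not. The Mehta--Van der Kallen theorem \cite[Thm.~1.3.14]{BriKu} requires no positivity of $\omega_{\tilde X}$ at all; its hypotheses are exactly that $\tilde X$ is smooth, split by $\tau^{p-1}$, and $\tau$ vanishes on the exceptional locus of a proper birational morphism to a normal variety. Since the exceptional locus of $\varphi$ lies in the complement of the open $G$-orbit, i.e.\ in the union of the boundary divisors, and $\tau = \pi^*(\ov s)\prod_i\sigma_i$ vanishes there by construction, \cite[Thm.~1.3.14]{BriKu} applies directly and gives $R^i\varphi_*\omega_{\tilde X}=0$. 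No ``positive pull-back twist'' is needed.

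The genuine gap is in your treatment of $R^i\varphi_*\mc O_{\tilde X}$. You write ``since $R^i\varphi_*\mc O_{\tilde X}$ is local on $X$, I may assume $X$ affine'' and then apply Proposition~\ref{prop.toroidal}(ii)(a). But that proposition requires the source to be a smooth toroidal $G/H$-embedding, and restricting over a non-$G$-stable affine open $U\subseteq X$ destroys this: $\varphi^{-1}(U)$ is not a $G$-variety at all, so nothing in Section~\ref{s.frobsplit_ratres} applies to it. Nor can one cover a general $G/H$-embedding by $G$-stable affine opens (a complete embedding has none containing a closed orbit), so there is no way to ``assume $X$ affine'' while retaining the structure your vanishing argument needs. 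This is exactly why the paper does not localise: it first reduces to the case where $X$ is projective via Lemma~\ref{lem.vanishing} --- which uses Sumihiro's theorem to pass to a quasi-projective $G$-stable open, compactifies to projective $G/H$-embeddings, and runs the Grothendieck spectral sequence --- and only then invokes Kempf's Lemma, applying Proposition~\ref{prop.toroidal}(ii)(a) to the semi-ample bundle $\varphi^*\mc L$ on the now-projective $\tilde X$. You already carry out precisely this Kempf-Lemma argument in your proof of (iv); what is missing in (i) is the reduction to the projective case, i.e.\ the content of Lemma~\ref{lem.vanishing}, which cannot be replaced by Zariski-localisation on the base.
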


\begin{proof}
(i). We construct a resolution $\varphi:\tilde X\to X$ with $\varphi$ projective and $\tilde X$ smooth, quasiprojective and toroidal as in \cite[Prop.~6.2.5]{BriKu}, \cite[Prop.~3]{Rit}. 
First we consider the normalisation $X'$ of the closure in $X\times\bf X$ of the graph of $G/H\to G/H_{\rm a}$. In the language of coloured fans this means that we form a toroidal covering of $X$
by ``removing the colours" from the fan of $X$. Then we construct the desingularisation $\tilde X\to X'$ by simplicial subdivision of the fan of $X'$. The ``toric slice"
in $\tilde X_0$ has the same fan as $\tilde X$. So, by the local structure theorem, we deduce the smoothness of $\tilde X$ from that of the toric slice.
By the local structure theorem we can also deduce the quasi-projectivity of $\tilde X$ from that of the toric slice as in \cite[Prop.~6.2.3(iv)]{BriKu}:
Since the toric slice is quasiprojective, a nonnegative combination of the boundary divisors of the toric slice is ample.
Now we form the same combination of the corresponding boundary divisors of $\tilde X$. This is ample on $\tilde  X_0$ which is the inverse image of ${\bf X}_0$ under $\pi:\tilde X\to{\bf X}$.
By the $G$-equivariance of $\pi$ we obtain the this divisor is ample relative to $\pi$. So, since ${\bf X}$ is projective, it follows that $\tilde X$ is quasi-projective, see \cite[Prop. 4.6.13(ii)]{Gro}.
Alternatively one can deduce the quasi-projectivity of $\tilde X$ from the description of ample divisors on a spherical variety, see \cite[Sect.~17.5]{Tim}, \cite[Sect.~5]{Bri3}.

We show that $\varphi$ is rational. The arguments are very similar to the proof of \cite[Cor.~6.2.8]{BriKu}. Only for the vanishing of the higher direct images of
$\mc O_{\tilde X}$ we have to use Proposition~\ref{prop.toroidal}(ii)(a) rather than the arguments in [loc. cit.].
Since $\varphi$ is proper and birational and $X$ is normal, we have $\varphi_*(\mc O_{\tilde X})={\mc O}_X$. 
Let $\tau^{p-1}$ be the Frobenius splitting of $\tilde X$ given by Proposition~\ref{prop.toroidal}(i). Then $\tau$ vanishes on the exceptional locus of $\varphi$, since the latter is contained in
the complement of the open $G$-orbit, i.e. the union of the boundary divisors. So, by \cite[Thm.~1.3.14]{BriKu} we have $R^i\varphi_*(\omega_{\tilde X})=0$ for all $i\ge 1$.

It remains to show that $R^i\varphi_*(\mc O_{\tilde X})=0$ for all $i\ge 1$. By Lemma~\ref{lem.vanishing} we may assume that $X$ is projective. 
By Kempf's Lemma, \cite[Lem.~1]{Kempf} or \cite[Lem.~3.3.3(a)]{BriKu},
it suffices to show that there exists an ample line bundle $\mc L$ on $X$ such that $H^i(\tilde X,\varphi^*(\mc L))=0$ for all $i\ge 1$.
Here we note that for Kempf's Lemma it is enough that $\varphi$ is proper and $X$ proper over an affine. 
Note furthermore that $\varphi^*(\mc L)$ is semi-ample if $\mc L$ is, so by Proposition~\ref{prop.toroidal}(ii)(a) any ample line bundle $\mc L$ on $X$ will do, and there exists one, since $X$ is projective.\\
(ii).\ We take a resolution $\varphi:\tilde X\to X$ as in (i). Then, by \cite[Lem.~1.1.8]{BriKu}, we can push a splitting of $\tilde X$ as in Proposition~\ref{prop.toroidal}(i) forward to $X$,
since $\varphi_*(\mc O_{\tilde X})={\mc O}_X$. It will compatibly split all $G$-stable closed subvarieties, since this holds for the splitting of $\tilde X$.\\
(iii).\ Again we take a resolution $\varphi:\tilde X\to X$ as in (i). Note that $\tilde X$ is proper over an affine, since $X$ is.
Since $\tilde X$ is also quasi-projective, it is projective over an affine. Since $\varphi$ is rational $H^i(X,\mc L)\stackrel{\sim}{\to}H^i(\tilde X,\varphi^*(\mc L))$ for all $i\ge0$, see e.g. \cite[Lemma~3.3.2]{BriKu}.
Choose $\tilde Y\subseteq\tilde X$ irreducible $G$-stable with $\varphi(\tilde Y)=Y$. Then the pull-back map $H^0(Y,\mc L)\to H^0(\tilde Y,\varphi^*(\mc L))$ is injective.
Now $\varphi^*(\mc L)$ is semi-ample since $\mc L$ is, so the assertions follow from Proposition~\ref{prop.toroidal}(ii).\\
(iv). This follows from (iii) and Kempf's Lemma, see the proof of (i).
\end{proof}

\begin{thm}\label{thm.cansplit}
Assume (B2). Let $X$ be a $G/H$-embedding. Then $X$ has a $B$-canonical Frobenius splitting which compatibly splits all $G$-stable closed subvarieties.
\end{thm}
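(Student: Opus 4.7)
The plan is to produce the splitting on a smooth toroidal $G/H$-embedding and then push it forward along a $G$-equivariant rational resolution of $X$. First, by Remark~\ref{rem.cansplit}.2 the hypothesis (B2) implies (B1), so Theorem~\ref{thm.resolution}(i) furnishes a $G$-equivariant rational resolution $\varphi:\tilde X\to X$ with $\tilde X$ smooth, quasiprojective and toroidal. Since $\varphi_*\mc O_{\tilde X}=\mc O_X$ and $\varphi$ is $G$-equivariant, the push-forward sends any $B$-canonical splitting of $\tilde X$ to a $B$-canonical splitting of $X$ (cf.\ \cite[Lem.~1.1.8]{BriKu}; the $\Dist(G)$-action on global splittings is preserved by $G$-equivariant push-forward). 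Every $G$-stable closed subvariety of $X$ is the image of one of $\tilde X$, so compatibility is preserved. Hence it suffices to construct, for $\tilde X$ smooth and toroidal, a $B$-canonical splitting compatible with all boundary divisors $\tilde X_i$.

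For such $\tilde X$ I would first lift the standard $B$-canonical splitting of the closed orbit to $\mathbf X$. By Lemma~\ref{lem.local_structure}(ii) the closed orbit of $\mathbf X$ is $G/P^-$, and the standard Frobenius splitting $\sigma_0\in H^0(G/P^-,\omega_{G/P^-}^{1-p})$---the $(p-1)$-th power of the weight $2\rho_P$ section appearing in the proof of Lemma~\ref{lem.GmodP}---is well known to be $B$-canonical (see e.g.\ \cite[Ch.~4]{BriKu}). Assumption~(B2) yields a short exact sequence of $G$-modules
$$0\to K\to H^0(\mathbf X,\tilde\omega_{\mathbf X}^{1-p})\to H^0(G/P^-,\omega_{G/P^-}^{1-p})\to 0$$
with $K$ admitting a good filtration. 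By the Mathieu-type criterion for lifting $B$-canonical elements through $G$-module surjections with good-filtration kernel (cf.\ \cite[Sect.~4.1]{BriKu}), $\sigma_0$ lifts to a $B$-canonical $\tilde s_0\in H^0(\mathbf X,\tilde\omega_{\mathbf X}^{1-p})$.

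I would next transfer $\tilde s_0$ to $\tilde X$. Let $\pi:\tilde X\to\mathbf X$ be the canonical $G$-equivariant morphism. By Lemma~\ref{lem.pull-back} one has $\pi^*\tilde\omega_{\mathbf X}\cong\tilde\omega_{\tilde X}$ $G$-equivariantly. Using the canonical inclusion $\tilde\omega_{\tilde X}^{1-p}\hookrightarrow\omega_{\tilde X}^{1-p}$ given by multiplication by $\prod_i\sigma_i^{p-1}$, where $\sigma_i$ is the canonical section of $\mc O_{\tilde X}(\tilde X_i)$, set
$$\tau:=\pi^*(\tilde s_0)\cdot\prod_i\sigma_i^{p-1}\in H^0(\tilde X,\omega_{\tilde X}^{1-p}).$$
The same local expansion argument at a point of a closed orbit used in the proof of Proposition~\ref{prop.toroidal}(i)---$\pi$ restricts to an isomorphism on each closed orbit by Lemma~\ref{lem.local_structure}(ii), $\tau$ restricts to $\sigma_0$ up to a nowhere zero factor, and $\tau$ vanishes to order $p-1$ along each $\tilde X_i$---shows that $\tau$ is a Frobenius splitting of $\tilde X$ compatibly splitting all $\tilde X_i$, and hence all $G$-stable closed subvarieties.

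The main obstacle is verifying that $\tau$ is $B$-canonical. The pull-back $\pi^*(\tilde s_0)$ is $B$-canonical because $\pi$ is $G$-equivariant and so induces a $\Dist(G)$-equivariant map on global sections. Under the unique $G$-linearisation of $\mc O_{\tilde X}(\tilde X_i)$ for which $\sigma_i$ is a $G$-fixed global section---available because $\tilde X_i$ is $G$-stable---each $\sigma_i$ is annihilated by $e_\alpha^{(n)}$ for every simple $\alpha$ and every $n\ge 1$. The same holds for $\prod_i\sigma_i^{p-1}$, and the Leibniz rule for divided powers then gives $e_\alpha^{(n)}(\tau)=e_\alpha^{(n)}(\pi^*(\tilde s_0))\cdot\prod_i\sigma_i^{p-1}$, which vanishes for $n\ge p$ by the $B$-canonicity of $\pi^*(\tilde s_0)$. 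Since $\tau$ is also $T$-fixed (both factors being $T$-fixed), it is $B$-canonical. Pushing $\tau$ forward along $\varphi$ now produces the required $B$-canonical splitting of $X$ compatible with all $G$-stable closed subvarieties.
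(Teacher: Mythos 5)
Your overall strategy matches the paper's exactly: lift the $B$-canonical splitting of the closed orbit $G/P^-$ to $\mathbf X$ using (B2), pull back along $\pi:\tilde X\to\mathbf X$, multiply by $\prod_i\sigma_i^{p-1}$ to land in $H^0(\tilde X,\omega_{\tilde X}^{1-p})$, and push down along $\varphi$. Two points need fixing.

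First, a genuine error in identifying $\sigma_0$. You assert that the $(p-1)$-th power splitting of $G/P^-$ from the proof of Lemma~\ref{lem.GmodP} is $B$-canonical. But the introduction of the paper explicitly flags the opposite: when $P\ne B$, the $B$-canonical splitting of $G/P$ is in general \emph{not} a $(p-1)$-th power, and this is precisely why the paper needs two different kinds of splitting (one by a $(p-1)$-th power for the Mehta--van~der~Kallen vanishing, one $B$-canonical for good filtrations). The correct object is the unique $B$-canonical splitting of $G/P^-$ furnished by \cite[Thm.~4.1.15]{BriKu}; if you substitute that for your $\sigma_0$ the rest of the argument goes through, since you never actually use the $(p-1)$-th power structure of $\sigma_0$.

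Second, a gap in the lifting step. You appeal to a ``Mathieu-type criterion for lifting $B$-canonical elements through $G$-module surjections with good-filtration kernel,'' but this is not a ready-made statement. The paper supplies the mechanism: after replacing $G$ by the product of its connected centre with the simply connected cover of $DG$ so that $\rho\in X(T)$ (needed to have $\St$ available), one uses \cite[Lem.~4.1.6]{BriKu} to characterise $B$-canonical elements of a $G$-module $M$ as precisely the images of $v_-\ot v_+$ under $G$-module maps $\St\ot\St\to M$. Since $\St\cong\Delta((p-1)\rho)$, the tensor square $\St\ot\St$ admits a Weyl filtration, so $\Hom_G(\St\ot\St,-)$ is exact on short exact sequences of modules with good filtrations. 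Applying this to the sequence coming from (B2) lifts the map $f$ with $f(v_-\ot v_+)=s$ to $\ov f$, and $\ov s=\ov f(v_-\ot v_+)$ is the $B$-canonical lift. Your argument needs some such spelling-out to be complete. The subsequent verification of $B$-canonicity of $\tau$ via the Leibniz rule for divided powers, with $\prod_i\sigma_i^{p-1}$ a $G$-invariant section, is correct and more explicit than the paper's ``clearly.''
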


\begin{proof}
After replacing $G$ by the connected centre times the simply connected cover of the derived group $DG$, we may assume that $\rho\in X(T)$. 
Let $\varphi:\tilde X\to X$ be a resolution as in Theorem~\ref{thm.resolution}(i).
Recall that the (first) Steinberg module ${\rm St}$ is irreducible and also isomorphic to the induced module
$\nabla((p-1)\rho)$ and to the Weyl module $\Delta((p-1)\rho)$, see \cite[II.3.18,19]{Jan}. Let $v_-$ and $v_+$ be a (nonzero) lowest and highest weight vector of ${\rm St}$.

If $X'$ is a smooth $G/H$-embedding, then, by \cite[Lem.~4.1.6]{BriKu}, $\sigma\in H^0(X',\omega_{X'}^{1-p})$ is $B$-canonical if an only if it is the image of $v_-\ot v_+$
under a $G$-module homomorphism ${\rm St}\ot{\rm St}\to H^0({X'},\omega_{X'}^{1-p})$. Let $s\in H^0(G/P^{-},\omega_{G/P^{-}}^{1-p})$ be the $B$-canonical splitting of $G/P^-$,
see \cite[Thm.~4.1.15]{BriKu}. Since ${\rm St}$ is a Weyl module, ${\rm St}\ot{\rm St}$ has a Weyl filtration by \cite[Prop.~II.4.21]{Jan} or \cite[Cor.~4.2.14]{BriKu}.
From \cite[Prop.~II.4.13]{Jan} one now easily deduces that the functor $\Hom_G({\rm St}\ot{\rm St},-)$ is exact on short exact sequences of $G$-modules with a good filtration.
So from (B2) it follows that the map
$$\Hom_G\big({\rm St}\ot{\rm St},H^0({\bf X},\tilde\omega_{\bf X}^{1-p})\big)\to\Hom_G\big({\rm St}\ot{\rm St},H^0(G/P^{-},\omega_{G/P^{-}}^{1-p})\big)$$ is surjective.
Now let $f\in\Hom_G({\rm St}\ot{\rm St},H^0(G/P^{-},\omega_{G/P^{-}}^{1-p}))$ be such that $f(v_-\ot v_+)=s$. We lift $f$ to $\ov f\in\Hom_G({\rm St}\ot{\rm St},H^0({\bf X},\tilde\omega_{\bf X}^{1-p}))$
and we put $\ov s=\ov f(v_-\ot v_+)$. Clearly $\ov s$ restricts to $s$.

Let $\pi:{\tilde X}\to\bf X$ be as in Lemma~\ref{lem.pull-back}.
As in the proof of Proposition~\ref{prop.toroidal}(i) one can show now, by considering a toroidal smooth completion of $\tilde X$,
that $\pi^*(\ov s)\prod_i\sigma_i^{p-1}\in H^0({\tilde X},\omega_{\tilde X}^{1-p})$
is a splitting of $\tilde X$ which compatibly splits all $G$-stable closed subvarieties. Clearly this splitting is $B$-canonical.
Finally we push the splitting down to $X$ by means of $\varphi$ and apply \cite[Lem.~1.1.8, Ex.~4.1.E(3)]{BriKu}.
\end{proof}

\begin{rems}\label{rem.cansplit}
1.\ The existence of a $B$-canonical splitting as in Theorem~\ref{thm.cansplit} implies existence of good filtrations in several situations,
see \cite[Thm.~4.2.13, Ex.~4.2.E(2)]{BriKu}. It also implies normality of the $G$-orbit closures, see \cite[Cor to Prop.~2.1]{T}.\\
2.\ Assumption (B2) is in fact equivalent to the existence of a $B$-canonical splitting of $\bf X$ which compatibly splits the closed orbit $G/P^-$.
Indeed, assuming the latter, we have that $H^0({\bf X},\tilde\omega_{\bf X}^{1-p})\to H^0(G/P^{-},\omega_{G/P^{-}}^{1-p})$ is surjective, since $\tilde\omega_{\bf X}^{1-p}$ is ample
by \cite[Cor.~17.20]{Tim} and $G/P^-$ is compatibly split, see \cite[Thm.~1.4.8]{BriKu}.
Furthermore, the kernel of the above map has a good filtration by \cite[Ex.~4.2.E(2)]{BriKu}.
Similarly, (B1) is equivalent to the existence of a splitting of $\bf X$ which compatibly splits $G/P^-$.
In particular, (B2) implies (B1).\\
3.\ Theorem~\ref{thm.resolution} does not hold for all equivariant embeddings of spherical homogeneous spaces.
The following example was mentioned to me by one of the referees. Let $Y$ be a $G$-variety of unseparated
flags admitting an ample line bundle $\mc L$ such that $H^1(Y,\mc L)\ne0$ (examples of such
varieties are given in \cite{HL}, Examples 3 and 4 in Section 6). Let $\pi:\tilde X\to Y$ be the structure
map of the dual line bundle, put $X := \Spec(k[\tilde X])$, the ``affine cone over $Y$", and let $\varphi:\tilde X\to X$ be
the canonical map. Then $X$ is normal and $\tilde X$ and $X$ are spherical $G\times k^\times$\,-varieties. Furthermore, $\varphi$ is an equivariant
resolution, see \cite[Lem.~1.1.13]{BriKu}. But $\varphi$ is not rational, since
$$H^0(X,R^1\varphi_*\mc O_{\tilde X}) = H^1(\tilde X,\mc O_{\tilde X}) = H^1(Y,\pi_*\mc O_{\tilde X}) = \bigoplus_{n\ge 0}H^1(Y,\mc L^n)\ne0\,.$$
Here the first equality holds since $X$ is affine, and the second one since $\pi$ is affine. 
See \cite[Prop.~III.8.5, Ex.~III.8.2]{H2}.
\end{rems}

\section{Symmetric spaces}\label{s.symmetric}
We notice that the case of group embeddings in any characteristic is already treated in \cite{Rit, T}.
In the remainder of this section we assume that ${\rm char}(k)\ne 2$. For background on symmetric spaces we refer to \cite{Rich, V, DeC-Spr, Tim}.
Let $\theta_{\rm ad}\ne\id$ be an involution of the adjoint group $G_{\ad}$ of $G$, let $\pi:G\to G_{\ad}$ be the canonical homomorphism,
let $Z(G)$ be the ordinary centre of $G$ and let $Z_{\rm sch}(G)$ be the schematic centre of $G$ which is also the schematic kernel of $\pi$.
The fixed point subgroup $G_{\rm ad}^{\theta_{\rm ad}}$ is a (smooth) reductive subgroup of $G$. Let $\pi^{-1}(G_{\rm ad}^{\theta_{\rm ad}})$
be the (ordinary) inverse image of $G_{\rm ad}^{\theta_{\rm ad}}$ under $\pi$.
Then $H_{\rm a}=Z_{\rm sch}(G)\pi^{-1}(G_{\rm ad}^{\theta_{\rm ad}})$
(see \cite[I.6.2]{Jan} for this notation) is the schematic inverse image of $G_{\rm ad}^{\theta_{\rm ad}}$  under $\pi$.
Let $T_{\rm ad}$ be a maximal torus of $G_{\rm ad}$ which contains a maximal $\theta_{\rm ad}$-split torus, let $P_{\rm ad}$
be a minimal $\theta_{\ad}$-split parabolic subgroup containing $T_{\rm ad}$ and let $B_{\rm ad}$ be a Borel subgroup of $P_{\rm ad}$
containing $T_{\rm ad}$ as in e.g. \cite[Sect.~26.4]{Tim} or \cite[Sect.~1]{DeC-Spr}.
Let $T,P,B$ be the corresponding maximal torus, parabolic and Borel subgroup of $G$.
Now let $H$ be a closed subgroup scheme of $H_{\rm a}$ such that (A1) and (A4) are satisfied. Then $B\cdot x$ is open in $G/H$.

For example, assume $H$ is a closed subgroup scheme of $H_{\rm a}$ with $(G^\theta)^0\subseteq H$ for an involution $\theta$ of
$G$ with $\pi\circ\theta=\theta_{\rm ad}\circ\pi$.\footnote{A $\theta$ with this last property exists when $G$ is semi-simple simply connected, see \cite[9.16]{St}.
}
Then we have
$$\pi^{-1}(G_{\rm ad}^{\theta_{\rm ad}})=\{g\in G\,|\,g\theta(g)^{-1}\in Z(G)\}\,.$$
Since $T$ contains a maximal $\theta$-split torus, it is $\theta$-stable. 
By the arguments from \cite[Lem.~8.1]{Rich} (or \cite[Lemme~2]{V}) $H_{\rm a}(k)$ is generated by $(G^\theta)^0$ and $H_{\rm a}(k)\cap T$.
Adding $Z_{\rm sch}(G)\subseteq T$ to $H_{\rm a}(k)$ gives us $H_{\rm a}$, so (A1) is satisfied in this case.
Now assume in addition $H$ is reduced. We check that (A4) is satisfied.
Since $H=(G^\theta)^0(H\cap T)$ and, by property (*), the character of $H\cap T$ on $T_x(G/H)$ is given by $-2\rho_P$
it is enough to look at the $H^0$-character. 
The $G$-action on the top exterior power of $\Lie(G)$ is trivial and the $H^0$-action on the top exterior power of $\Lie(H)$ is trivial,
so the $H^0$-action on the top exterior power of $\Lie(G)/\Lie(H)$ is trivial. Similarly, the $(G_{\rm ad}^{\theta_{\rm ad}})^0$-action on the top exterior power of
$\Lie(G_{\rm ad})/\Lie(G_{\rm ad}^{\theta_{\rm ad}})$ is trivial. Since $H^0$ maps to $(G_{\rm ad}^{\theta_{\rm ad}})^0$, (A4) follows.


Now we check that the assumptions (A2) and (A3) from Section~\ref{s.symmetric}, which only depend on $H_{\rm a}$, are satisfied.
By \cite[Prop.~3.8, Thm.~3.9]{DeC-Spr}, $G/H_{\rm a}=G_{\rm ad}/G_{\rm ad}^{\theta_{\rm ad}}$ has a wonderful compactification for which
the local structure theorem, with $T$ and $P$ as above, holds, so (A2) is satisfied.
Furthermore, the $T_{\rm ad}=\pi(T)$-orbit map of $x$ is separable by \cite[1.8, 3.5]{DeC-Spr}, so (A3) is satisfied by Lemma~\ref{lem.candiv}.

Finally, we check the assumptions in the results from Section~\ref{s.frobsplit_ratres}. It follows from \cite[Thm.~5.9]{DeC-Spr} that (B1) is satisfied.
Indeed $\tilde\omega_{\bf X}^{-1}$ is ample by \cite[Cor.~17.20, Rem.~17.21]{Tim}, so \cite[Thm.~5.9]{DeC-Spr} and \cite[Thm.~1.2.8]{BriKu} imply that the restriction map
$H^0({\bf X},\tilde\omega_{\bf X}^{-1})\to H^0(G/P^{-},\omega_{G/P^{-}}^{-1})$ is surjective.
Similarly, it is clear from \cite[Prop.~5.7, Thm.~5.10]{DeC-Spr} that (B2) is satisfied
in this situation. Here one has to look at the construction of the good filtration of \cite[Thm.~5.10]{DeC-Spr} to see that it contains a good filtration
for the kernel of the restriction map. 

\section{Parabolic induction}\label{s.induction}
In this final section we show that most of the assumptions that we used are preserved if we apply parabolic induction to $G/H$.
We retain the notation and assumptions (A1)-(A4) of Section~\ref{s.prelim} and~\ref{s.frobsplit_ratres}.
Let $\mc G$ be a connected reductive group and let $\mc Q$ be a parabolic subgroup of $\mc G$ with a surjective homomorphism $\pi:\mc Q\to G$.
We assume that

\smallskip
\begin{enumerate}[({\rm P})]
\item the isogeny $\mc Q/\Ker(\pi)\to G$ induced by $\pi$ is central (cf. \cite[V.2.2]{Bo}).
\end{enumerate}
\smallskip

Let $\mc T$ be a maximal torus of $\mc G$ whose image in $G$ is $T$, let $\mc B$ be a Borel subgroup of $\mc Q$ containing $\mc T$
with $\pi(\mc B)=B$. Let $\mc P\subseteq\mc Q$ be the inverse image of $P$ under $\pi$ and let $\mc Q^-,\mc P^-,\mc B^-$
be the opposites of $\mc Q,\mc P, \mc B$ relative to $\mc T$.\footnote{In \cite{T} $P,\mc P$ and $\mc Q$ were denoted by $Q,\mc Q$ and $\mc P$.}
Let $\mc K$ be the Levi subgroup of $\mc Q$ containing $\mc T$
and denote the natural map $\mc Q^-\to\mc K\to G$ by $\pi^-$. Note that (P) also holds when we replace $\mc Q$ and $\pi$ by
$\mc Q^-$ and $\pi^-$. Now let $\mc H, \mc H_{\rm a}\subseteq \mc Q^-$ be the schematic inverse images under $\pi^-$ of $H$ and $H_{\rm a}$.
Let $\mc K'$ be the subgroup of $\mc K$ generated by $\mc T$ and the simple factors of $\mc K$ that do not lie in the kernel of $\pi$.
Then the isogeny $\mc K'\to G$ is central and $\mc Q^-/\mc H=G/H=\mc K'/\mc H_{\mc K'}$. Note that $\mc Q^-/\mc P^-=G/P^-$, so
$$\mc G\times^{\mc Q^-}G/H=\mc G/\mc H\text{\,\ and \ }\mc G\times^{\mc Q^-}G/P^-=\mc G/\mc P^-.$$
Recall also that for any $\mc Q^-$-variety $X$, the induced variety $\mc G\times^{\mc Q^-}X$ comes with a fibration $pr:\mc G\times^{\mc Q^-}X\to\mc G/\mc Q^-$
for which the fiber over the base point $\mc Q^-$ of $\mc G/\mc Q^-$ is $X$ and with $pr^{-1}\big(R_u(\mc Q)\mc Q^-/\mc Q^-\big)\cong R_u\mc Q\times X$.

By \cite[Lem.~3.1]{T} $\mc G/\mc H$ is spherical, its open $\mc B$-orbit is $\mc B\cdot x$.
Moreover, if $D_i,\,i\in I$ are the $\mc B$-stable prime divisors of $G/H$, then we have
unique $\mc B$-stable prime divisors $\mc D_i=\ov{\mc B D_i},\,i\in I$ of $\mc G/\mc H$ which intersect
$G/H$ in the $D_i$. The other $\mc B$-stable prime divisors of $\mc G/\mc H$ are the pull-backs
$\mc D_\alpha=\ov{\mc Bs_\alpha\cdot x}$ to $\mc G/\mc H$ of the $\mc B$-stable prime divisors
$\ov{\mc Bs_\alpha\mc Q^-/\mc Q^-}$ of $G/\mc Q^-$, $\alpha$ a simple root in $R_u\mc Q$ and $s_\alpha$ the corresponding reflection in the Weyl group of $\mc G$.
We will denote the closures of the $\mc B$-stable prime divisors of $\mc G/\mc H$ in any
$\mc G/\mc H$-embedding by the same letters. Recall that the valuation cone of $\mc G/\mc H$ is the same as that of $G/H$,
see \cite[Sect.~20.6]{Tim} or \cite[Prop.~3.1]{T}, and it is easy to determine the images of the $\mc D_\alpha$ in 
the valuation cone, cf. \cite[Prop.~4.1]{T}. Furthermore, the toroidal embeddings of $\mc G/\mc H$
are obtained by parabolically inducing the toroidal embeddings of $G/H$, see \cite[Prop.~3.2]{T}.

Now we will show that assumptions (A1)-(A4) hold for $\mc G$, $\mc H$, $\mc B$ etc.
If $T_1$ is a closed subgroup scheme of $H_T$ which normalises $H$ such that $T_1$ and $H$ generate $H_{\rm a}$, then the schematic
inverse image of $T_1$ under $\pi:\mc H_\mc T\to H_T$ normalises $\mc H$
and together with $\mc H$ it generates $\mc H_{\rm a}$. Thus (A1) holds for $\mc G,\mc H$ and $\mc H_{\rm a}$.

By (A2) $\mc Q^-/\mc H_{\rm a}=G/H_{\rm a}$ has a wonderful compactification $\bf X$ for which the local structure theorem holds.
So, by \cite[Prop.~3.3]{T}, $\mc G/\mc H_{\rm a}$ has the wonderful compactification $\mbc X=\mc G\times^{\mc Q^-}\bf X$
for which the local structure theorem (relative to $x,\mc T,\mc B$ and with $\mc P$ as above) holds.
Thus (A2) holds for $\mc G/\mc H_{\rm a}$.

\begin{lem}\label{lem.inducedsection}
Let $X$ be an irreducible $\mc K$-variety and let $\mc L$ be a $\mc G$-linearised line bundle on $\mc X=\mc G\times^{\mc Q^-}X$ ($R_u\mc Q^-$ acting trivially on $X$).
Let $\tilde s$ be the $\mc B$-semi-invariant lift of a nonzero $\mc B_\mc K$-semi-invariant rational section $s$ of $L=\mc L|_X$
of weight $\lambda$. Then $\tilde s$ is a global section if and only if $s$ is global and $\lambda$ is dominant relative to $\mc B$.
If $\tilde s$ is global, then the divisor $(\tilde s)$ of $\tilde s$ contains $\mc D_\alpha=\ov{\mc Bs_\alpha\cdot X}$ with coefficient $\la\lambda,\alpha^\vee\ra$
for every simple root $\alpha$ of $R_u\mc Q$.
\end{lem}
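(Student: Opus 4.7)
The plan is to construct $\tilde s$ directly on the open Bruhat cell of $\mc X$ and then verify both the extension criterion and the divisor formula by a rank-one reduction along each simple root of $R_u\mc Q$.

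\textbf{Construction of $\tilde s$ and necessity.} The preimage $pr^{-1}(R_u\mc Q\cdot\mc Q^-/\mc Q^-)$ is identified with $R_u\mc Q\times X$ via $(u,y)\mapsto[u,y]$ and is $\mc B$-stable since $\mc B\subseteq\mc Q$. Trivialising $\mc L$ on this big cell via its $\mc G$-linearisation, define $\tilde s(u,y):=s(y)$. Writing $\mc B=R_u\mc Q\rtimes\mc B_\mc K$ with $\mc B_\mc K\subseteq\mc K\subseteq\mc Q^-$ and $b=vb_K$, $v\in R_u\mc Q$, $b_K\in\mc B_\mc K$, one has $b\cdot[u,y]=[bub_K^{-1},b_K\cdot y]$, so $\tilde s$ is $\mc B$-semi-invariant of weight $\lambda$; extending by $\mc B$-equivariance produces a unique $\mc B$-semi-invariant rational section of $\mc L$ on $\mc X$ restricting to $s$ on the fibre $X\subset\mc X$ over the base point. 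If $\tilde s$ is global then $s=\tilde s|_X$ is global and $\tilde s$ is a $\mc B$-semi-invariant vector of weight $\lambda$ in the rational $\mc G$-module $H^0(\mc X,\mc L)$, forcing $\lambda$ to be dominant relative to $\mc B$.

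\textbf{Sufficiency and divisor formula.} Conversely, assume $s$ is global and $\lambda$ dominant. The only $\mc B$-stable prime divisors of $\mc X$ disjoint from the big cell are the $\mc D_\alpha=\ov{\mc Bs_\alpha\cdot X}$ for simple $\alpha\in R_u\mc Q$, so it suffices to bound the order of $\tilde s$ along each $\mc D_\alpha$. Fix such an $\alpha$ and let $H_\alpha\cong\SL_2$ or $\PGL_2$ be the rank-one subgroup generated by $U_{\pm\alpha}$; restricting to the $H_\alpha$-stable subvariety $H_\alpha\times^{H_\alpha\cap\mc Q^-}X\subseteq\mc X$, a $\mb P^1$-bundle with fibre $X$, reduces the problem to a rank-one computation. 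Using the standard $\SL_2$-identity
\[u_\alpha(t)=u_{-\alpha}(t^{-1})\,\alpha^\vee(t)\,n_\alpha\,u_{-\alpha}(t^{-1}),\]
together with the facts that $R_u\mc Q^-$ acts trivially on $L$ (any unipotent action on a line is trivial) and that $\alpha^\vee(t)\in\mc T$ acts on $s$ as multiplication by $t^{\la\lambda,\alpha^\vee\ra}$, the section $\tilde s$, written in the chart $(\tau,y)\mapsto[u_{-\alpha}(\tau)n_\alpha,y]$ (with $\tau=t^{-1}$) covering a neighbourhood of $\mc D_\alpha$, becomes $\tau^{\la\lambda,\alpha^\vee\ra}s(y)$. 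Hence $\tilde s$ extends regularly across $\mc D_\alpha$ iff $\la\lambda,\alpha^\vee\ra\ge 0$, and its order of vanishing along $\mc D_\alpha$ is then precisely $\la\lambda,\alpha^\vee\ra$. Dominance of $\lambda$ for the simple roots of $\mc K$ is automatic from $s$ being $\mc B_\mc K$-semi-invariant in the rational $\mc K$-module $\Gamma(X,L)$, completing both the criterion and the divisor formula.

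\textbf{Expected obstacle.} The technical heart is the rank-one chart-transition computation: one must trace the $\alpha^\vee(t)$-factor of the $\SL_2$-identity through the $\mc G$-linearisation of $\mc L$, using the triviality of the $R_u\mc Q^-$-action on $L$, to extract the exponent $\la\lambda,\alpha^\vee\ra$. Once this is in place, the rest is routine Bruhat decomposition bookkeeping plus the standard $\SL_2$-theory statement that a $\mc B$-semi-invariant vector of a given weight in a rational $\mc G$-module has dominant weight.
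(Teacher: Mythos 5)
Your proof takes a genuinely different route from the paper. Where the paper builds $\tilde s$ globally at once as the image of $v_\lambda$ under a $\mc G$-module map $\tilde f:\Delta(\lambda)\to H^0(\mc X,\mc L)$ obtained from the restriction $\Delta(\lambda)\twoheadrightarrow\Delta_{\mc K}(\lambda)$, the universal property of $\Delta_{\mc K}(\lambda)$ and Frobenius reciprocity, you construct $\tilde s$ directly on the preimage of the big cell and then analyse its behaviour at the Schubert divisors. For the divisor formula, the paper expands $u_\alpha(-a)\cdot n_\alpha^{-1}v_\lambda$ using the divided powers $X_{\alpha,i}\in\Dist(\mc G)$ and uses that the weight spaces killed by $\Delta(\lambda)\to\Delta_{\mc K}(\lambda)$ are precisely those of weight $\lambda-(m-i)\alpha$ for $i<m$; your use of the $\SL_2$-identity $u_\alpha(t)=u_{-\alpha}(t^{-1})\alpha^\vee(t)n_\alpha u_{-\alpha}(t^{-1})$ together with semi-invariance and the triviality of the $R_u\mc Q^-$-action on the fibres of $L$ gives the same exponent $\la\lambda,\alpha^\vee\ra$ by a more elementary change-of-chart computation. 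This part of your argument is correct and arguably cleaner.

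There is, however, a gap in the sufficiency direction. From ``$\tilde s$ is regular on the big cell and has nonnegative order along each $\mc D_\alpha$'' you conclude that $\tilde s$ is a global section. This inference uses that a rational section of a line bundle which is regular in codimension one is regular, which requires $\mc X=\mc G\times^{\mc Q^-}X$ (equivalently $X$) to be normal; but the lemma only assumes $X$ irreducible. (A related smaller point: your computation is carried out on a rank-one slice rather than on the open chart $\mc V_\alpha s_\alpha\mc U_\alpha\cdot X_s$, so one should say a word about why the order along $\mc D_\alpha$ can be read off the slice; this is fine because the $\mc U_\alpha$-coordinate is a local uniformizer uniformly in the $\mc V_\alpha$- and $X$-directions, but it needs to be noted.) The paper's Weyl-module construction sidesteps the normality issue entirely: $\tilde s=\tilde f(v_\lambda)$ is by construction an element of $H^0(\mc X,\mc L)$, with no codimension argument needed. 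So either add normality of $X$ as a hypothesis (harmless, since every application in the paper has $X={\bf X}$ smooth), or keep the paper's Weyl-module construction of the global section and use your $\SL_2$-identity only for the order computation.
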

\begin{proof}
Since the global sections of $\mc L$ form a rational $\mc G$-module it is clear that for $\tilde s$ to be global $s$ must be global and $\lambda$ dominant.
Now assume the latter. Let $\Delta_{\mc K}(\lambda)$ be the $\mc K$-Weyl module of highest weight $\lambda$. By \cite[II.2.11, 5.21]{Jan}, \cite[Sect.~1]{Don}
we can consider $\Delta_{\mc K}(\lambda)$ as the sub $\mc K$-module of the $\mc G$-Weyl module $\Delta(\lambda)$ generated by the highest weight vector $v_\lambda$.
In fact $\Delta_{\mc K}(\lambda)$ is the sum of the weight spaces of $\Delta(\lambda)$ corresponding to the weights which are congruent to $\lambda$ modulo
the root lattice of $\mc K$. The sum of the other weight spaces is $\mc Q^-$-stable and the quotient by it is isomorphic to
$\Delta_{\mc K}(\lambda)$ with $R_u\mc Q^-$ acting trivially. Using the projection $\Delta(\lambda)\to\Delta_{\mc K}(\lambda)$
and the universal property of the Weyl module $\Delta_{\mc K}(\lambda)$, we obtain a homomorphism of $\mc Q^-$-modules
$f:\Delta(\lambda)\to H^0(X,L)$ with $f(v_\lambda)=s$.
Since $\mc L=G\times^{\mc Q^-}L$ we have $$H^0(\mc X,\mc L)=\ind_{\mc Q^-}^\mc GH^0(X,L)=\Mor_{\mc Q^-}(\mc G,H^0(X,L))\,,$$
$\mc Q^-$ acting by right multiplication on $\mc G$, and Frobenius reciprocity gives us the homomorphism
of $\mc G$-modules $\tilde f:v\mapsto (g\mapsto f(g^{-1}v)):\Delta(\lambda)\to H^0(\mc X,\mc L)$.
Now we have $\tilde s=\tilde f(v_\lambda)=g\mapsto f(g^{-1}v_\lambda)$.
A direct description of $\tilde s$ without the above identification is $\tilde s(g\cdot y)=g\cdot f(g^{-1}v_\lambda)_y$ for all $g\in \mc G$ and $y\in X$.
Let $X_s$ be the nonzero locus of $s$ and let $\alpha$ be a simple root in $R_u\mc Q$. The pull-back prime divisor $\mc D_\alpha=\ov{\mc Bs_\alpha\cdot X_s}$ intersects the open set
$s_\alpha \mc B\cdot X_s=\mc V_\alpha s_\alpha \mc U_\alpha\cdot X_s\cong \mc V_\alpha\times\mc U_\alpha\times X_s$,
where $\mc V_\alpha$ is the product of the root subgroups $\mc U_\beta$ with $\beta>0,\ne\alpha$ and $s_\alpha(\beta)$ a root of $R_u\mc Q$.
Here the isomorphism $\mc V_\alpha\times\mc U_\alpha\times X_s\stackrel{\sim}{\to}\mc V_\alpha s_\alpha \mc U_\alpha\cdot X_s$ is given by
$(u,u',y)\mapsto u n_\alpha u'\cdot y$, where $n_\alpha\in N_\mc G(\mc T)$ is a representative of $s_\alpha$.
If we choose an isomorphism $\theta_\alpha:k_a\to\mc U_\alpha$, $k_a$ the additive group of $k$, and denote the corresponding $k_a$-coordinate by $a$
then $\mc D_\alpha\cap s_\alpha \mc B\cdot X_s$ is defined by $a=0$. Let $u\in\mc V_\alpha$, $y\in X$ and $a\in k$.
Then $\tilde s(un_\alpha\theta(a)\cdot y)=un_\alpha\theta(a)\cdot f(\theta(-a)n_\alpha^{-1}v_\lambda)_y$. Now $w=n_\alpha^{-1}v_\lambda$ is a nonzero weight vector
of weight $s_\alpha(\lambda)=\lambda-m\alpha$, with $m=\la\lambda,\alpha^\vee\ra$, and $\theta(-a)w=\sum_{i=0}^m(-a)^iX_{\alpha,i}w$ in the notation of \cite[II.1.12]{Jan}.
Furthermore, we have $f(X_{\alpha,i}w)=0$ for $i<m$ and $X_{\alpha,m}w=cv_\lambda$ for some $c\in k\sm\{0\}$.
So $\tilde s(un_\alpha\theta(a)\cdot y)=c(-a)^mun_\alpha\theta(a)\cdot s_y$. Since $un_\alpha\theta(a)\cdot y\mapsto un_\alpha\theta(a)\cdot s_y$
is nowhere zero on $s_\alpha \mc B\cdot X_s$ it follows that $(\tilde s)$ contains $\mc D_\alpha$ with coefficient $m$.
\end{proof}

\begin{cornn} 
Assumption (A3) holds for $\mc G/\mc H_{\rm a}$.
\end{cornn}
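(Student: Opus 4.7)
The plan is to produce a $\mc B$-semi-invariant global section $\ov s$ of $\omega_{\mbc X}^{-1}$ of weight $2\rho_{\mc P}$ whose zero divisor has the form required by ($*$), by inducing up the section on ${\bf X}$ provided by (A3) for $G/H_{\rm a}$ via Lemma~\ref{lem.inducedsection}. The first step is to identify the restriction of $\omega_{\mbc X}^{-1}$ to the fiber ${\bf X}$ of the projection $\mbc X\to\mc G/\mc Q^-$ over $e\mc Q^-$. The normal bundle of ${\bf X}$ in $\mbc X$ is the trivial $\mc Q^-$-equivariant bundle with fibre $\Lie(\mc G)/\Lie(\mc Q^-)\cong\Lie(R_u\mc Q)$, whose determinant has $\mc T$-weight $2\rho_{\mc Q}$. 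Hence adjunction gives $\omega_{\mbc X}^{-1}|_{\bf X}\cong\omega_{\bf X}^{-1}\otimes k_{2\rho_{\mc Q}}$ as $\mc Q^-$-linearised line bundles.

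Next, let $s\in H^0({\bf X},\omega_{\bf X}^{-1})$ be the $B$-semi-invariant section of weight $2\rho_P$ provided by (A3) for $G/H_{\rm a}$, with divisor $\sum_{D}a_D D+\sum_i{\bf X}_i$ and all $a_D>0$. Pulled back along $\pi$, this is $\mc B_{\mc K}$-semi-invariant, and $s\otimes 1$ is then a $\mc B_{\mc K}$-semi-invariant global section of $\omega_{\mbc X}^{-1}|_{\bf X}$ of weight $2\rho_P+2\rho_{\mc Q}=2\rho_{\mc P}$. Since $2\rho_{\mc P}$ is dominant relative to $\mc B$, Lemma~\ref{lem.inducedsection} delivers a $\mc B$-semi-invariant global section $\ov s$ of $\omega_{\mbc X}^{-1}$ of weight $2\rho_{\mc P}$ restricting to $s\otimes 1$ on ${\bf X}$.

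Finally, I would read off the divisor $(\ov s)$ on the three types of $\mc B$-stable prime divisor of $\mbc X$: on the closures $\mc D_i=\ov{\mc B D_i}$ of the $B$-stable prime divisors of $G/H_{\rm a}$ the coefficient is inherited from $s$, hence equals $a_{D_i}>0$; on each boundary divisor $\mbc X_i$ (the $\mc G$-sweep of ${\bf X}_i$) the coefficient is $1$; and on each pull-back divisor $\mc D_\alpha$ for $\alpha$ a simple root of $R_u\mc Q$, Lemma~\ref{lem.inducedsection} gives the coefficient $\la 2\rho_{\mc P},\alpha^\vee\ra$. The only positivity check that is not immediate is for the last family, and I expect this to be the main—though still routine—point: writing $\rho=\rho_{\mc P}+\rho_{\mc K_{\mc P}}$ with $\mc K_{\mc P}$ the Levi of $\mc P$, the inclusion $\mc P\subseteq\mc Q$ forces $\mc K_{\mc P}\subseteq\mc K$, so $\alpha$ is not a simple root of $\mc K_{\mc P}$; since $\rho_{\mc K_{\mc P}}$ is a non-negative combination of simple roots of $\mc K_{\mc P}$, each of which pairs non-positively with $\alpha^\vee$, one obtains $\la\rho_{\mc P},\alpha^\vee\ra=\la\rho,\alpha^\vee\ra-\la\rho_{\mc K_{\mc P}},\alpha^\vee\ra\ge 1$, hence $\la 2\rho_{\mc P},\alpha^\vee\ra\ge 2$. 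This shows every $\mc B$-stable prime divisor of $\mbc X$ that is not $\mc G$-stable appears in $(\ov s)$ with strictly positive coefficient and every boundary divisor with coefficient $1$, which is exactly property ($*$) for $\mbc X$.
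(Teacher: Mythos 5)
Your proposal is correct and follows essentially the same route as the paper: identify $\omega_{\mbc X}^{-1}|_{\bf X}\cong\omega_{\bf X}^{-1}\ot k_{2\rho_{\mc Q}}$ (the paper writes this via $\omega_{\mbc X}=\omega_{pr}\ot pr^*\omega_{\mc G/\mc Q^-}$, you via adjunction and the normal bundle, which is the same computation), apply Lemma~\ref{lem.inducedsection} with $\lambda=2\rho_{\mc P}=2\rho_P+2\rho_{\mc Q}$, and note $\la 2\rho_{\mc P},\alpha^\vee\ra>0$ for $\alpha$ simple in $R_u\mc Q\subseteq R_u\mc P$. The only cosmetic difference is that you spell out the positivity $\la\rho_{\mc P},\alpha^\vee\ra\ge 1$ via $\rho=\rho_{\mc P}+\rho_{L(\mc P)}$, which the paper leaves implicit.
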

\begin{proof}
We have $\omega_{\mbc X}=\omega_{pr}\ot pr^*\omega_{\mc G/\mc Q^-}$, where $pr:\mbc X\to\mc G/\mc Q^-$ is the canonical projection
and $\omega_{pr}$ is the relative canonical bundle. Now $pr^*\omega_{\mc G/\mc Q^-}=\mc G\times^{\mc Q^-}({\bf X}\times k_{-2\rho_\mc Q})$ and
$\omega_{pr}=\mc G\times^{\mc Q^-}\omega_{\bf X}$, so $\omega_{\mbc X}^{-1}=\mc G\times^{\mc Q^-}(\omega_{\bf X}^{-1}\ot k_{2\rho_\mc Q})$.
Let $s'$ be the $B$- (or $\mc B_{\mc K}$-) semi-invariant global section of weight $2\rho_P$ of $\omega_{\bf X}^{-1}$ and let $s$ be $s'$ considered as a
($\mc B_{\mc K}$-semi-invariant) global section of $\omega_{\bf X}^{-1}\ot k_{2\rho_\mc Q}$.
Note that we can consider $2\rho_P$ as a character of $\mc T$ and then it is the sum of the roots in $R_u\mc P_\mc K=R_u\mc P_{\mc K'}$, so $\rho_\mc P=\rho_P+\rho_\mc Q$.
Since $pr^{-1}(\mc BQ^-/\mc Q^-)\cong R_u\mc Q\times\bf X$ and since (A3) holds for $G/H_{\rm a}$, it is enough to show that $s$ lifts to a $\mc B$-semi-invariant global section $\tilde s$ of
$\omega_{\mbc X}^{-1}$ of weight $2\rho_{\mc P}$ which vanishes along the prime divisors $\mbc D_\alpha$, $\alpha$ a simple root in $R_u\mc Q$. So the result follows from
Lemma~\ref{lem.inducedsection} with $\lambda=2\rho_\mc P$, noting that $\la\lambda,\alpha^\vee\ra>0$ for $\alpha$ a simple root in $R_u\mc Q\subseteq R_u\mc P$.
\end{proof}

Clearly (A4) is equivalent to

\medskip
{\it\noindent $\omega_{G/H}$ is $G$-equivariantly isomorphic to the pull-back of $\omega_{G/H_{\rm a}}$ along $G/H\to G/H_{\rm a}$.}
\medskip

\noindent But, as in the proof of the above corollary, we have $\omega_{\mc G/\mc H}=(\mc G\times^{\mc Q^-}\omega_{G/H})\ot pr^*\omega_{\mc G/\mc Q^-}$
and $\omega_{\mc G/\mc H_{\rm a}}=(\mc G\times^{\mc Q^-}\omega_{G/H_{\rm a}})\ot pr_{\rm a}^*\omega_{\mc G/\mc Q^-}$, where $pr:\mc G/\mc H\to\mc G/\mc Q^-$ and
$pr_{\rm a}:\mc G/\mc H_{\rm a}\to\mc G/\mc Q^-$ are the canonical projections. From this it follows that (A4) holds for $\mc G$, $\mc H$ and $\mc H_{\rm a}$. 

We now show that assumption (B2) is preserved under parabolic induction.
\begin{prop}\label{prop.induction}
If ${\bf X}$ satisfies (B2), then so does ${\mbc X}$.
\end{prop}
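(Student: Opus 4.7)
The plan is to reduce property (B2) for $\mbc X$ to (B2) for $\bf X$ via the Frobenius reciprocity identification of global sections of induced sheaves, together with the standard behaviour of good filtrations under parabolic induction. Setting $\nu=2(p-1)\rho_{\mc Q}$, from the formula $\omega_{\mbc X}=\mc G\times^{\mc Q^-}(\omega_{\bf X}\otimes k_{-2\rho_{\mc Q}})$ established in the proof of the preceding corollary, together with the fact that the boundary divisors of $\mbc X$ are the $\mc G\times^{\mc Q^-}{\bf X}_i$, one deduces
$$\tilde\omega_{\mbc X}^{1-p}=\mc G\times^{\mc Q^-}(\tilde\omega_{\bf X}^{1-p}\otimes k_\nu),\qquad\omega_{\mc G/\mc P^-}^{1-p}=\mc G\times^{\mc Q^-}(\omega_{G/P^-}^{1-p}\otimes k_\nu).$$
Writing $M=H^0({\bf X},\tilde\omega_{\bf X}^{1-p})$ and $N=H^0(G/P^-,\omega_{G/P^-}^{1-p})$, Frobenius reciprocity then gives $\mc G$-module identifications $H^0(\mbc X,\tilde\omega_{\mbc X}^{1-p})\cong\ind_{\mc Q^-}^{\mc G}(M\otimes k_\nu)$ and $H^0(\mc G/\mc P^-,\omega_{\mc G/\mc P^-}^{1-p})\cong\ind_{\mc Q^-}^{\mc G}(N\otimes k_\nu)$, under which the restriction map becomes $\ind_{\mc Q^-}^{\mc G}$ applied to the map $M\otimes k_\nu\to N\otimes k_\nu$ induced from the restriction $M\to N$.

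By (B2) for $\bf X$ there is an exact sequence $0\to K\to M\to N\to 0$ of $G$-modules with $K$ having a good filtration. Pulling back via $\pi^-$ and tensoring by $k_\nu$ gives an exact sequence of $\mc Q^-$-modules, and the main technical point to check is that $K\otimes k_\nu$ has a good filtration as a $\mc Q^-$-module. Since $\pi^-$ factors through the Levi $\mc K$ as a central isogeny $\mc K\to G$, $G$-Weyl duals pull back to $\mc K$-Weyl duals (the corresponding highest weights lift through the central isogeny) and so pullback preserves good filtrations. Moreover $\nu$ is a character of $\mc K$ trivial on its derived group, equivalently pairing to zero with every simple coroot of $\mc K$, so tensoring with $k_\nu$ sends each $\nabla_{\mc K}(\lambda)$ in the filtration to $\nabla_{\mc K}(\lambda+\nu)$, with $\lambda+\nu$ still $\mc K$-dominant (and in fact still $\mc G$-dominant, since $\langle\nu,\alpha^\vee\rangle>0$ for $\alpha$ a simple root of $\mc G$ in $R_u\mc Q$). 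This yields the required $\mc Q^-$-good filtration on $K\otimes k_\nu$.

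Finally, applying $\ind_{\mc Q^-}^{\mc G}$ and invoking the standard fact (via Kempf vanishing on $\mc G/\mc Q^-$ together with transitivity of induction; cf.\ \cite[II.4.13, II.4.16]{Jan}) that this functor is exact on short exact sequences of $\mc Q^-$-modules with good filtrations and carries them to $\mc G$-modules with good filtrations, one obtains the exact sequence
$$0\to\ind_{\mc Q^-}^{\mc G}(K\otimes k_\nu)\to H^0(\mbc X,\tilde\omega_{\mbc X}^{1-p})\to H^0(\mc G/\mc P^-,\omega_{\mc G/\mc P^-}^{1-p})\to 0$$
of $\mc G$-modules with the kernel having a good filtration, which is (B2) for $\mbc X$. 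The hardest step is the middle one: keeping track of how good filtrations behave under pullback along $\pi^-$ and under tensoring with the one-dimensional twist $k_\nu$; once translated to the Levi $\mc K$, this reduces to the centrality of $\nu$ with respect to $\mc K$ and the standard compatibility of central isogenies with induced modules.
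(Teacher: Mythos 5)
Your sheaf computations and the Frobenius reciprocity identifications are correct, and the first reduction (pull back the $G$-good filtration to a $\mathcal K$-good filtration and twist by $k_\nu$, $\nu=2(p-1)\rho_{\mathcal Q}$) is sound: the factors $\nabla_G(\mu)$ do pull back to $\nabla_{\mathcal K}(\tilde\mu)$ and twisting a $\mathcal K$-good filtration by a character of $\mathcal K$ gives a $\mathcal K$-good filtration with shifted weights. The gap is in the final step, where you need $\ind_{\mathcal Q^-}^{\mathcal G}$ to be exact on the resulting short exact sequence and to send it to modules with $\mathcal G$-good filtrations. By transitivity of induction this is controlled by $H^i(\mathcal G/\mathcal B^-,\tilde\mu+\nu)$, and the required vanishing and good-filtration statements need the weights $\tilde\mu+\nu$ to be $\mathcal G$-dominant. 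Your justification for this, namely $\langle\nu,\alpha^\vee\rangle>0$ for $\alpha$ a simple root in $R_u\mathcal Q$, does not suffice: $\nu$ is a fixed weight, while the pairings $\langle\tilde\mu,\alpha^\vee\rangle$ are \emph{not} controlled by the $G$-dominance of $\mu$ (the coroot $\alpha^\vee$ is not a coroot of $\mathcal K$, so $\pi\circ\alpha^\vee$ is just some cocharacter of $T$ and the pairing can be arbitrarily negative). For instance with $\mathcal G=\mathrm{SL}_3$, $\mathcal K\cong\mathrm{GL}_2=G$, $\alpha=\alpha_2$ and $\mu=a\varepsilon_1+b\varepsilon_2$ ($a\ge b$, $G$-dominant) one gets $\langle\tilde\mu+\nu,\alpha_2^\vee\rangle=b+3(p-1)$, which is negative as soon as $b<-3(p-1)$. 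So the invocation of Kempf vanishing for $\ind_{\mathcal Q^-}^{\mathcal G}$ does not go through, and the claim that $\ind_{\mathcal Q^-}^{\mathcal G}(K\otimes k_\nu)$ has a good filtration and that the induced restriction map is surjective is not established.

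The paper's proof avoids this entirely by not working with global sections at all. It uses the equivalence recorded in Remark~\ref{rem.cansplit}.2, namely that (B2) is equivalent to the existence of a $B$-canonical Frobenius splitting of $\mathbf X$ compatible with the closed orbit $G/P^-$. Such a splitting is also $B^-$-canonical, hence $\mathcal B^-$-canonical for the $\mathcal Q^-$-action via $\pi$; Mathieu's parabolic induction theorem for canonical splittings then gives a $\mathcal B$-canonical splitting of $\mathcal G\times^{\mathcal B^-}\mathbf X$ compatible with $\mathcal G\times^{\mathcal B^-}(\mathcal Q^-/\mathcal P^-)$, and this is pushed down along the $\mathcal Q^-/\mathcal B^-$-fibration $\mathcal G\times^{\mathcal B^-}\mathbf X\to\boldsymbol{\mathcal X}$ to give the required $\mathcal B$-canonical splitting of $\boldsymbol{\mathcal X}$ compatible with the closed orbit. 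If you want to keep a good-filtration argument along your lines, you would at least need a result that controls $R^{\bullet}\ind_{\mathcal Q^-}^{\mathcal G}$ on $\mathcal Q^-$-modules with good filtration \emph{without} $\mathcal G$-dominance of the weights, and no such general statement is available; the Frobenius-splitting route via Remark~\ref{rem.cansplit}.2 is precisely what sidesteps this.
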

\begin{proof}
By Remark~\ref{rem.cansplit}.2 the assumption is equivalent to the existence of a $B$-canonical splitting of $\bf X$ which compatibly splits the closed orbit $G/P^-$.
Assume the latter holds. Then this splitting is also $B^-$-canonical, by \cite[Prop.~4.1.10]{BriKu}, and therefore also $\mc B^-$-canonical,
when we consider $X$ as a $\mc Q^-$-variety via $\pi:\mc Q^-\to G$. So, by a result of Mathieu, see \cite[Prop.~5.5]{Mat} or \cite[Thm.~4.1.17, Ex.~4.1.E(4)]{BriKu},
$\mc G\times^{\mc B^-}\bf X$ is $\mc B^-$-canonically and therefore also $\mc B$-canonically Frobenius split, compatible with $\mc G\times^{\mc B^-}\mc Q^-/\mc P^-$.
From the fact that the morphism $\mc G\times^{\mc B^-}\bf X\to\mbc X=\mc G\times^{\mc Q^-}\bf X$ is a locally trivial fibration with fiber $\mc Q^-/\mc B^-$ we deduce
easily that the push-forward of the structure sheaf of $\mc G\times^{\mc B^-}\bf X$ is that of $\mbc X$. Now we push the splitting down to $\mbc X=\mc G\times^{\mc Q^-}\bf X$
and obtain that $\mbc X$ is $\mc B$-canonically Frobenius split compatible with the closed $G$-orbit.
\end{proof}
\begin{rem}
It is not clear to me whether (B1) is preserved under parabolic induction,
but recall from Remark~\ref{rem.cansplit}.2 that it is implied by (B2).
\end{rem}

We end with a description of the Picard group of $\mbc X$ in terms of that of $\bf X$ similar to \cite[Prop.~4.2]{T}\footnote{
In the proof of \cite[Prop.~4.2(i)]{T} the first occurrence of $(-\varpi_i,\varpi_i)$ should be replaced by
$\varpi_{i1}+\varpi_{i2}$ and the second occurrence should be omitted.
} 
in the case of induction from a reductive group.
Since we are only interested in $\mbc X$ and $\bf X$ we may assume that $\mc H_{\rm a}=\mc H$ and $H_{\rm a}=H$, and since
$\mc H_{\rm a}$ and $H_{\rm a}$ contain the connected centres of $\mc G$ and $G$ we may assume that $\mc G$ and $G$
are semi-simple and $\mc G$ simply connected. Then ${\rm Pic}(\mbc X)={\rm Pic}_{\mc G}(\mbc X)$ and the restriction of 
$\pi$ to the derived group $D\mc K'$ is the simply connected cover $\tilde G\to G$ of $G$.
It is now easy to see that we obtain a diagram with exact rows as in \cite[p26]{DeC}
$$
\xymatrix{
0\ar[r] & {\rm Pic}(\mc G/\mc Q^-)\ar[r]^{p^*}\ar[d]^{\id}&{\rm Pic}(\mbc X)\ar[r]^{\tilde{h}^*}\ar[d]^{j^*}&{\rm Pic}(\bf X)\ar[r]\ar[d]^{i^*}&0\\
0\ar[r] & {\rm Pic}(\mc G/\mc Q^-)\ar[r]^{(pj)^*}&{\rm Pic}(\mc G/\mc P^-)\ar[r]^{h^*}&{\rm Pic}(G/P^-)\ar[r]&0
}
$$
where the arrows are pull-backs associated to the maps $i:G/P^-\hookrightarrow X$,\\ $\tilde h:X\hookrightarrow\mbc X$,
$h:G/P^-\hookrightarrow\mc G/\mc P^-$, $j:\mc G/\mc P^-\hookrightarrow\mbc X$, and $p:\mbc X\to\mc G/\mc Q^-$.
Here one uses that ${\rm Pic}(R_u\mc Q\times\bf X)={\rm Pic}(\bf X)$ and similar for $G/P^-=\mc Q^-/\mc P^-$.
Furthermore, $j^*$ is injective if $i^*$ is, see \cite[Prop.~4.1]{DeC}. 

As is well-known, ${\rm Pic}(\mc G/\mc P^-)$ and ${\rm Pic}(G/P^-)$
are isomorphic to certain subgroups $\Omega$ and $\Sigma$ of the weight lattices of $\mc G$ and $\tilde G$.
The first is freely generated by the fundamental weights $\omega_\beta$, $\beta$ a simple root in $R_uP$ (or in $R_u\mc P_{\mc K'}=R_u\mc P_{\mc K}$),
and $\omega_\alpha$, $\alpha$ a simple root in $R_u\mc Q$; the second is freely generated by the fundamental weights
$\varpi_\beta$, $\beta$ a simple root in $R_uP$. Furthermore, it is well-known that ${\rm Pic}(\mbc X)$ is freely generated by the
line bundles corresponding to the $\mc B$-stable prime divisors that are not $\mc G$-stable, and similar for ${\rm Pic}(\bf X)$.

Using the fact that ${\rm Pic}({\mbc X}_0)=0$ one can describe $j^*$ (and similarly $i^*$) as follows: a line bundle $\mc L$ on $\mbc X$
is mapped to the weight of the (unique up to scalar multiples) $\mc B$-semi-invariant rational section of $\mc L$ which is nonzero on $\mc G/\mc P^-$. 
In the case of the line bundle corresponding to a $\mc B$-stable prime divisor which is not $\mc G$-stable this is the weight of the canonical section. 
Similar for $\bf X$, where we use the Borel subgroup $\tilde B$ of $\tilde G$ corresponding to $B$.

We describe a right inverse to $h^*$.
The natural restriction map $\Omega\to\Sigma$ sends $\omega_\beta$ to $\varpi_\beta$ and the $\omega_\alpha$ to $0$.
This map has a natural right inverse: for each $\lambda\in\Sigma$ there is a unique $\tilde\lambda\in\Omega$ which restricts to $\lambda$
and with $\la\tilde\lambda,\alpha^\vee\ra=0$, for all simple roots $\alpha$ in $R_u\mc Q$. Under this map $\varpi_\beta$ goes to $\omega_\beta$.

Assume from now on that $i^*$ (and $j^*$) is injective.\footnote{This is the case for symmetric spaces, see \cite[Thm.~4.2(ii)]{DeC-Spr}.}
We describe the right inverse to $\tilde{h}^*$ corresponding to that of $h^*$.
If for the $B$-stable prime divisor ${\bf D}_i$, the line bundle $L({\bf D}_i)$ on $\bf X$ has image $\mu_i$ in $\Sigma$,
then the line bundle $\mc L(\mbc D_i)$ has image $\tilde\mu_i$ in $\Omega$.
Indeed, if $s$ is the canonical section of $\mc L(\mbc D_i)$, then $(s)=\mbc D_i$
and $s|_{\bf X}$ is the canonical section of $L({\bf D}_i)$. So, by Lemma~\ref{lem.inducedsection}, $s$ must have weight $\tilde\mu_i$. 
Thus we can conclude that $L({\bf D}_i)$ is mapped to $\mc L(\mbc D_i)$.
We note that, under this right inverse, the line bundles $L({\bf X}_j)$ corresponding to the boundary divisors ${\bf X}_j$ of $\bf X$ will in general
not be mapped to the line bundles $\mc L({\mbc X}_j)$ of $\mbc X$.
The $L({\bf X}_j)$ have a natural $G$-linearisation and we have $\mc L({\mbc X}_j)=\mc  G\times^{\mc Q^-}L({\bf X}_j)$.
So the images of the $L({\bf X}_j)$ in $\Sigma$ actually lie in the character group of $T$ and the images of the $\mc L({\mbc X}_j)$ in $\Omega$ are obtained by pulling
these back along $\pi:\mc T\to T$.

So in the case $\bf X$ is a symmetric space, one simply obtains ${\rm Pic}(\mbc X)$ by replacing in the formulas at the end of Remark~\ref{rems.candiv}.2 the fundamental weights of $\tilde G$ by the
corresponding fundamental weights of $\mc G$ and then adding to these the fundamental weights $\omega_\alpha$, $\alpha$ a simple root in $R_u\mc Q$.
The images of the $\mc L({\mbc X}_j)$ in $\Omega $ are obtained by pulling the weights $\alpha_j-\theta(\alpha_j)$ back along $\pi:\mc T\to T$.

\medskip

\noindent{\it Acknowledgement}. I would like to thank D.~Timashev and M.~Brion for helpful discussions, and the referee for helpful comments.
This research was funded by the EPSRC grant EP/L013037/1.

\bigskip

{\sc\noindent School of Mathematics,
University of Leeds, Leeds, LS2 9JT , UK.
{\it E-mail address : }{\tt R.H.Tange@leeds.ac.uk}
}

\end{document}